\documentclass[reqno,11pt]{amsart}

\usepackage{amsmath}
\usepackage{esint}
\usepackage{amsthm}
\usepackage{calligra}
\usepackage{epsfig}
\usepackage{psfrag}
\usepackage{graphicx}
\usepackage{graphpap,latexsym,epsf}
\usepackage{color}
\usepackage{amssymb,mathrsfs,enumerate}
\usepackage{endnotes}
\usepackage{calligra}
\usepackage{a4wide}
\usepackage[colorlinks, citecolor=citegreen, linkcolor=refred]{hyperref}
\usepackage{enumitem}
\usepackage{accents}
\usepackage[colorinlistoftodos]{todonotes}

\usepackage{mathtools}
\mathtoolsset{showonlyrefs}


\newcommand{\R}{\mathbb{R}}
\newcommand{\N}{\mathbb{N}}
\newcommand{\Z}{\mathbb{Z}}

\def\HHH{{\rm H}}
\def\RRR{{\mathrm R}}

\newcommand{\pa}{\partial}

\newcommand{\ep}{\varepsilon}

\newcommand{\Ric}{{\rm Ric}}

\newcommand{\Ro}{{\rm R}}
\newcommand{\D}{{\rm D}}

\newcommand{\na}{\nabla}

\newcommand{\vertl}{\vert\hspace{.07em}}
\newcommand{\vertr}{\hspace{.07em}\vert}
\newcommand{\xx}{\hspace{.07em}}
\def\ringg#1{\accentset{\circ}{#1}}
\newcommand{\Tau}{\mathrm{T}}

\mathchardef\emptyset="001F

\definecolor{citegreen}{rgb}{0,0.6,0}
\definecolor{refred}{rgb}{0.8,0,0}

\theoremstyle{plain}

\newtheorem{theorem}{Theorem}[section]

\newtheorem{lemma}[theorem]{Lemma}
\newtheorem{ackn}{Acknowledgement\ \!\!\!\!\!}

\theoremstyle{definition}
\newtheorem{definition}[theorem]{Definition}

\theoremstyle{remark}
\newtheorem{remark}[theorem]{Remark}

\numberwithin{equation}{section}

\makeatletter
\@namedef{subjclassname@2020}{\textup{2020} Mathematics Subject Classification}
\makeatother

\title[Riemannian Penrose inequality via Nonlinear Potential Theory]{Riemannian Penrose Inequality\\via Nonlinear Potential Theory}

\author[V.~Agostiniani]{Virginia Agostiniani}
\address{V.~Agostiniani, Universit\`a degli Studi di Trento, via Sommarive 14, 38123 Povo (TN), Italy}
\email{virginia.agostiniani@unitn.it}

\author[C.~Mantegazza]{Carlo Mantegazza}
\address{C.~Mantegazza, Universit\`a degli Studi di Napoli Federico II \& Scuola Superiore Meridionale, via Cintia (complesso universitario di Monte Sant'Angelo) 26, 80126 Napoli (NA), Italy}
\email{carlo.mantegazza@unina.it}

\author[L.~Mazzieri]{Lorenzo Mazzieri}
\address{L.~Mazzieri, Universit\`a degli Studi di Trento, via Sommarive 14, 38123 Povo (TN), Italy}
\email{lorenzo.mazzieri@unitn.it}

\author[F.~Oronzio]{Francesca Oronzio}
\address{F.~Oronzio,  Scuola Superiore Meridionale, Largo S. Marcellino, 10, 80138, Napoli, Italy}
\email{f.oronzio@ssmeridionale.it}

\date{\today}
\keywords{Monotonicity formulas, $p$--harmonic functions, ADM mass, geometric inequalities.}
\subjclass[2020]{53C21, 31C12, 31C15, 53Z05}

\hyphenation{a-symp-to-ti-cal-ly}

\begin{document}

\begin{abstract}
We provide a new proof of the Riemannian Penrose inequality for time--symmetric asymptotically flat initial data with a single black--hole horizon.
The proof proceeds through a newly established monotonicity formula holding along the level sets of the $p$--capacitary potential of the horizon boundary, in any asymptotically flat $3$--manifold with nonnegative scalar curvature. 
\end{abstract}

\maketitle

\section{A monotonicity formula for $p$--capacitary potentials.}

In the last decades, level set methods have proven to be a very powerful tools for the comprehension of geometric phenomena. A nowadays classical field of application is for example the existence theory for various  fundamental geometric evolution equations, notably, the {\em mean curvature flow}~\cite{CGG,ES1,ES2}, or the {\em inverse mean curvature flow}~\cite{HI}. On one hand, these theories are tailored for providing very general and flexible notions of weak solutions, on the other hand, understanding properly their qualitative behaviors and their geometric features usually requires a considerable amount of work. This is due not only to the inborn lack of regularity, but also to the fact that, in general, the evolution described by the weak solutions can be geometrically different from the one dictated by the classical theory, even for small times. Viceversa, classical solutions may count on a definitely less flexible existence theory, which is compensated by a much more transparent geometric behavior, as most of the formal computations that one can perform turn out to be rigorously justified.

A major challenge is then providing weak solutions with a sufficiently large tool--set for their qualitative analysis, eventually leading to relevant geometric conclusions. A significant and successful example in this sense is given by the masterful work of Huisken and Ilmanen in the proof of the {\em Riemannian Penrose inequality}~\cite{HI}. Briefly speaking, this inequality says that the total ADM mass and the area of the horizon boundary of an asymptotically flat $3$--manifold $(M,g)$ are related as follows,
\begin{equation}
m_{\mathrm{ADM}}\,\geq \,\sqrt{	\frac{|\pa M|}{16 \pi}}\,.
\end{equation}
In their celebrated paper, Huisken and Ilmanen settle the theory of weak solutions to the inverse mean curvature flow and -- even more remarkably -- provide them with an effective {\em monotonicity formula}, which in turn paves the way to the proof of the above geometric inequality. To put this fundamental work in perspective, it is worth recalling that the smooth counterpart of the whole procedure, namely, the {\em Geroch monotonicity formula} for the Hawking mass along the smooth inverse mean curvature flow~\cite{Ger}, was already available since the early seventies (see also~\cite{JW77}). In other words, it took more than twenty years to pass from the classical computation along the smooth flow to the full justification of the monotonicity formula along the weak flow. This is reflected in the fact that the proof of Huisken and Ilmanen is extremely sophisticated and requires a number of deep conceptual and technical insights. 

The aim of our work is to propose an alternative and simplified approach to the Riemannian Penrose inequality, where the weak inverse mean curvature flow is replaced by the level set flow of the $p$--capacitary potential of the black--hole horizon. This is a $p$--harmonic function solving the Dirichlet boundary value problem~\eqref{f1}, for which both the existence and the regularity theory are nowadays well understood. On this regard, it is also worth mentioning that, thanks to the result of Moser~\cite{Moser2007}, and its recent extension~\cite{MRS}, it is actually possible to use $p$--harmonic functions to recover the existence theory for the weak inverse mean curvature flow. However, at a first sight, in the absence of corresponding monotonicity formulas, the $p$--harmonic approach seemed to be essentially ineffective for drawing geometric conclusions. This perspective has changed completely after a series of recent works~\cite{Ago_Fog_Maz_2,Ben_Fog_Maz_1,FMP}, where some monotonicity formulas were established along the level sets flow of $p$--harmonic functions and subsequently employed to deduce new general versions of the Minkowski inequality in the classical Euclidean framework, as well as on complete manifolds with nonnegative Ricci curvature. 
These works were inspired and preceded by their harmonic counterparts~\cite{Ago_Fog_Maz_1,Ago_Maz_CV,Col_Min_2,Col_Min_3}, starting with Colding's breakthrough~\cite{Colding_Acta}. 

In a very recent paper~\cite{Ago_Maz_Oro_2}, a more sophisticated version of these monotonicity formulas was finally made available for the level sets flow of the Green's functions, in the context of asymptotically flat $3$--manifolds with nonnegative scalar curvature, leading to a simple proof of the {\em positive mass theorem} (see also~\cite{Ago_Maz_CMP,Ago_Maz_Oro_1,Bra_Kaz_Khu_Ste_2019,Hir_Kaz_Khu,Mun_Wan,Stern} for related results and methods). In the same paper~\cite[Section 3]{Ago_Maz_Oro_2} a Geroch--type computation was also performed along the smooth level sets flow of $p$--harmonic functions with nowhere vanishing gradient, to obtain a new proof of the Riemannian Penrose inequality under such (and some other minor) favorable assumption.  

In the present paper, we are going to analyze in full details the
general situation, where the $p$--capacitary potential of the horizon
boundary is allowed to have critical points, hence the corresponding
flow is possibly no longer smooth for all times, might experience 
jumps and the level sets could be subject to topological changes. 
In this spirit, our work
parallels the effort made by Huisken and Ilmanen in their extension of
the Geroch monotonicity result to the weak solutions of the inverse
mean curvature flow. On the other hand, we hope that lowering the
technical level of the proof would make the result more accessible, 
setting the stage for further investigations.

\begin{ackn} The authors are grateful to Riccardo~Benedetti,
  Stefano~Borghini and Bruno~Martelli for several useful discussions
  about the topological aspects of the proof. They also thank
  Luca~Benatti for some very useful insights about the asymptotic
  analysis involved in the final part of the proof. The authors are
  members of the Gruppo Nazionale per l'Analisi Matematica, la
  Probabilit\`a e le loro Applicazioni (GNAMPA), which is part of the
  Istituto Nazionale di Alta Matematica (INdAM).
\end{ackn}

\subsection{Setting and preliminaries.}

Let $(M,g)$ be a $3$--dimensional, complete, noncompact Riemannian manifold with nonnegative scalar curvature and smooth, compact and connected boundary $\partial M$. For $1<p<3$, let us {\em assume} that the following problem
\begin{equation}\label{f1}
\begin{cases}
\Delta_{p} u=0\ &\mathrm{in} \ {M}\\
\quad \,\,u=0 &\mathrm{on} \ \partial M\\
\quad \,\,u \to 1&\mathrm{\,at} \ \infty
\end{cases}
\end{equation}
admits a weak solution $u_p \in \mathscr{C}^{1, \beta} (M)\cap W^{1,p}(M)$, where $\Delta_pu={\mathrm{div}\,}\bigl( |\na u|^{p-2} \na u \bigr)$ denotes the $p$--Laplacian operator of $(M,g)$. Natural conditions ensuring the existence of the function $u_p$ will be introduced in Section~\ref{sec:RPI} (see Definition~\ref{def:AF}). By the maximum principle for $p$--harmonic functions (see~\cite[Lemma~3.18 and Theorem~6.5]{HKO}), such solution is unique and takes values in $[0,1)$. In particular, we have that $\partial M$ coincides with the level set $\{u_p=0\}$ and that $u_{p}:M\to [0,1)$ is proper, by virtue of the third condition in  problem~\eqref{f1}. We also observe that, by the results in~\cite{DiBenedetto1,DiBenedetto2}, the function $u_p$ is smooth outside its critical set
\begin{equation} 
\mathrm{Crit}(u_p) \, = \, \{ x \in M \, : \, \na u_p (x) = 0 \, \} \, .
\end{equation}
In particular, the boundary datum is attained smoothly, as the Hopf lemma (see~\cite[Section~2]{Ben_Fog_Maz_1} and references therein) implies that $t=0$ is a regular value of $u_p$.
 To proceed, we recall that for any $1<p<3$, the $p$--capacity of $\partial M$ is defined as 
\begin{equation}\label{pcapacity}
\mathrm{Cap}_{p}(\partial M) \,= \,\inf \bigg\{ \int\limits_{M}\vert \nabla v \vert ^{p}\,d\mu:\,v\in \mathscr{C}^{\infty}_{c}(M), \,v=1\,\,\text{on}\,\,\partial M\bigg\}
\end{equation}
and it is related to $u_p$ through the following identities
\begin{equation}\label{eq24}
\mathrm{Cap}_{p}(\partial M)\,=\int\limits_{M}\vert \nabla u_p\vert^{p}\,d\mu \,= \!\!\!\!\int\limits_{\{u_p=t\}} \!\!\!\vert \nabla u_p\vert^{p-1}\,d\sigma\,,
\end{equation}
for every regular value $t$ of $u_{p}$, see~\cite[Section~2]{Ben_Fog_Maz_1}. For the sake of notation, let us set
\begin{equation}\label{eqcp}
c_{p}\,=\,\bigg(\frac{\mathrm{Cap}_{p}(\partial M)}{4\pi}\bigg)^{\!\frac{1}{p-1}}\,.
\end{equation}
Let us also point out that, whenever there is no possibility of misunderstanding, we will systematically drop the subscript $p$ and we will simply denote by $u$ is the solution of  problem~\eqref{f1}. With these notations at hand, we now consider the vector field
\begin{equation}\label{X_p} 
X = \frac{c_p^{\frac{p-1}{3-p}}}{\left[ \,\frac{3-p}{p-1}\, (1-u) \right]^{\frac{p-1}{3-p}}} 
\left\{\frac{|\na u|^{p-2} \na u}{c_p^{p-1}} + \frac{ \nabla |\nabla u| - \frac{\Delta u}{|\na u|}\nabla u}{\phantom{\Bigl[}\,\frac{3-p}{p-1}\, (1-u) \phantom{\Bigr]}^{\phantom{1}}} \,+ \,\frac{|\na u| \na u}{\left[ \,\frac{3-p}{p-1}\, (1-u) \right]^2} \right\}\,.
\end{equation}
Notice that $X$ is well defined and smooth away from the critical points of $u$. Denoting by $\langle\,,\rangle$ the scalar product given by the metric $g$, we introduce the function 
\begin{equation}\label{defFp}
{F}_{p}(t)\,\,\,= \!\!\!\!\!\int\limits_{\{u=\alpha_{p}(t)\}} \!\!\! \left\langle X , \frac{\na u}{|\na u|}\right\rangle\,d\sigma \, ,
\end{equation}
where 
\begin{equation}\label{eqtp}
\alpha_{p}(t)=1-\Bigl(\frac{t_p}{t}\Bigr)^{\!\frac{3-p}{p-1}} \, , \qquad \text{ with } \qquad t_p = \Big(\,\frac{p-1}{3-p}\,c_p\Big)^{\!\frac{p-1}{3-p}} \, ,
\end{equation}
and the variable $t$ ranges in $\left[\,t_p, + \infty \right)$. The function $F_p$ is then well defined whenever $\alpha_p(t)$ is a regular value of $u$. To make the definition of $F_p$ more explicit, we observe that, expanding the equation $\Delta_{p} u=0 $ away from the critical points, one gets
\begin{equation}\label{eqff1000}
\Delta u\,=\,(2-p)\,\frac{\big\langle \na |\na u|,\na u\rangle}{\vert\nabla u\vert}\,.
\end{equation}
Consequently, the mean curvature $\HHH$ of a regular level set $\{u=\tau\}$ computed with respect to the outward unit normal $\na u/|\na u|$ 
can be expressed as 
\begin{equation}\label{eqff1001}
\HHH \,=\,\frac{\Delta u}{|\na u|}-\frac{\big\langle\na\vert \na u\vert, \na u\big\rangle}{|\na u|^2}\,=\,- (p-1) \,\frac{\big\langle\na\vert \na u\vert, \na u\big\rangle}{|\na u|^2} \,.
\end{equation}
Taking into account the above expression together with~\eqref{eq24} and~\eqref{eqcp}, it is easy to check that $F_p$ can be expressed as
\begin{equation}\label{eqfp}
{F}_{p}(t)\,= \,\,4\pi t \,-\,\frac{t^{\frac{2}{p-1}}}{c_{p}} \!\!\!\!\int\limits_{\{u=\alpha_{p}(t)\}} \!\!\!\! \vert \nabla u\vert \,\mathrm{H}\,d\sigma \,+ \,\frac{t^{\frac{5-p}{p-1}}}{c_{p}^{2}} \!\!\!\int\limits_{\{u=\alpha_{p}(t)\}} \!\!\!\vert \nabla u\vert^{2} \,d\sigma\,\,.
\end{equation}
We mention that setting $p=2$ and $c_p=1$ in this formula, one gets the same monotone quantity as the one employed in~\cite{Ago_Maz_Oro_2} to provide a Green's function proof of the {\em positive mass theorem}. 

\medskip

We are then ready to state the following monotonicity result. 

\begin{theorem}[Monotonicity along the regular values]\label{thm:monotonicity}
Let $(M,g)$ be a $3$--dimensional, complete, noncompact Riemannian manifold with nonnegative scalar curvature and smooth, compact and connected boundary $\partial M$. Moreover, assume that $H_2(M, \pa M;\Z)=\{0\}$ and suppose that, for every $1<p<3$, problem~\eqref{f1} admits a unique solution $u_p \in \mathscr{C}^{1, \beta} (M)\cap W^{1,p}(M)$. Let $s,t \in \left[\,t_p, + \infty \right)$ be such that $\alpha_p(s)$ and $\alpha_p (t)$ are regular values for $u_p$. Then, the following implication holds true
\begin{equation}
s \leq t \quad \Rightarrow \quad F_p(s) \leq F_p (t) \,,
\end{equation}
where $F_p$ is the function defined in formula~\eqref{defFp}.
\end{theorem}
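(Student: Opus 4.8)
The plan is to establish monotonicity in two stages: first the easy case where the interval $[\alpha_p(s),\alpha_p(t)]$ contains only regular values of $u$, via a direct differentiation and a Geroch-type computation; then the general case, where critical values may intervene, by a limiting/approximation argument that controls the jumps.

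For the regular case, I would parametrize by $\tau\in[\alpha_p(s),\alpha_p(t)]$ and compute $\frac{d}{d\tau}\int_{\{u=\tau\}}\langle X,\na u/|\na u|\rangle\,d\sigma$ using the coarea formula and the divergence theorem on the region $\{\alpha_p(s)\le u\le \tau\}$, so that the derivative equals $\int_{\{u=\tau\}}\mathrm{div}\,X\,d\mu/\ldots$ — more precisely, by the divergence theorem the flux difference equals $\int \mathrm{div}\,X\,d\mu$, so it suffices to show $\mathrm{div}\,X\ge 0$ pointwise away from $\mathrm{Crit}(u)$. Expanding $\mathrm{div}\,X$ using the $p$-Laplace equation in the form~\eqref{eqff1000}, the Bochner formula for $|\na u|$, and the identity~\eqref{eqff1001} for $\mathrm{H}$, one collects terms into (i) the ambient scalar curvature $\mathrm{R}_g\ge 0$, (ii) a Gauss-equation contribution involving the intrinsic curvature of the level sets, which after integration and use of Gauss–Bonnet on $\{u=\tau\}$ produces the Euler characteristic, and (iii) a manifestly nonnegative quadratic term in the traceless second fundamental form and the tangential gradient of $|\na u|$. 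The topological hypothesis $H_2(M,\pa M;\Z)=0$ enters here: it forces every regular level set to be connected (separating $\pa M$ from infinity with no extra components that could be spheres of the wrong sign or higher-genus pieces contributing unfavorably), so that $\int_{\{u=\tau\}}\mathrm{K}\,d\sigma=2\pi\chi\le 4\pi$ with the right sign. This is exactly the mechanism by which the split form~\eqref{eq:fpsplit} is seen to be monotone: the Hawking-mass term is controlled from below and the remainder is a square.

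For the general case, the difficulty is that $F_p$ need not even be continuous across critical values, and the level sets $\{u=\alpha_p(t)\}$ may change topology. I would handle this by the standard Sard-type observation that a.e. value of $u$ is regular, so both $s$ and $t$ can be approached by regular values, combined with a \emph{semicontinuity} estimate: across a critical value the quantity $\int_{\{u=\tau\}}\langle X,\na u/|\na u|\rangle\,d\sigma$ can only \emph{jump up} (or stay the same), because the contribution of a critical level to the flux is, after a careful local analysis near the critical set, nonnegative — geometrically, pinching or merging of level-set components in the Hawking-mass picture only helps. One packages this by integrating $\mathrm{div}\,X$ over the open set of regular values between $s$ and $t$ (which is full measure), showing the integral converges and bounds $F_p(t)-F_p(s)$ from below by a nonnegative quantity plus the (nonnegative) jump contributions. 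Alternatively, and perhaps more cleanly, one replaces $u$ by a smooth approximation with no critical points (e.g.\ via the Moser-type regularization, or by perturbing the level value), runs the smooth Geroch computation, and passes to the limit using the $\mathscr{C}^{1,\beta}$ regularity and the $L^p$ bounds on $\na u$ to control the boundary integrals.

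The main obstacle I anticipate is precisely the \textbf{behavior at critical values}: proving that $F_p$ is lower semicontinuous from the left and that no ``mass'' is lost across a critical level. This requires a genuinely local argument near $\mathrm{Crit}(u)$ — estimating the flux of $X$ through small level sets surrounding critical points and showing the deficit has a sign — and it is the analogue, in this simplified setting, of the delicate jump analysis in Huisken–Ilmanen. Everything else (the Bochner computation showing $\mathrm{div}\,X\ge 0$, the Gauss–Bonnet bookkeeping, the reduction to regular values) is, modulo careful calculation, routine.
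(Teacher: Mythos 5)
Your outline of the regular part matches the paper's: one does not prove $\mathrm{div}\,X\ge 0$ pointwise (it is not nonnegative pointwise), but integrates $\mathrm{div}\,X$ via the coarea formula, applies Gauss--Bonnet on each level set, and invokes the connectedness of the regular level sets, which indeed follows from $H_2(M,\pa M;\Z)=\{0\}$ together with the strong maximum principle. The gap is in the general case. You appeal to ``the standard Sard-type observation that a.e.\ value of $u$ is regular.'' For $p\neq 2$ this is not available: the optimal regularity of $p$--harmonic functions in dimension $3$ is only $\mathscr{C}^{1,\beta}$, Sard's theorem requires more derivatives, and no result in the literature excludes that the set of critical \emph{values} of $u_p$ has positive measure. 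This is precisely the central technical obstruction of the problem, and your scheme for the singular case (integrate $\mathrm{div}\,X$ over the full-measure set of regular values, plus signed jump contributions) collapses without it. Your fallback --- ``replace $u$ by a smooth approximation with no critical points'' --- is also unavailable as stated: one cannot arrange the approximants to be free of critical points.

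The actual resolution has two ingredients you would need. First, even under the favorable assumption that the critical values are negligible, the critical \emph{points} are handled not by a local jump analysis near $\mathrm{Crit}(u)$ but by a global cutoff argument: one splits $X$ into its $p$--harmonic flux part plus a remainder $Y$, multiplies $Y$ by cutoffs $\eta_k$ vanishing near $\mathrm{Crit}(u)$, checks that the additional term produced by $\eta_k'$ in the divergence is \emph{nonnegative}, and passes to the limit $k\to\infty$ after decomposing $\mathrm{div}\,Y=P+D$ with $P\ge 0$ (monotone convergence) and $D$ locally bounded (dominated convergence). Second, to remove the negligibility assumption one regularizes the \emph{equation} \`a la Di~Benedetto, replacing $|\na u|^{p-2}$ by $(|\na u|^2+\ep^2)^{(p-2)/2}$ on the compact region $\{0\le u_p\le T\}$: the solutions $u_p^\ep$ are smooth, so Sard's theorem applies to \emph{them}, and one proves an approximate monotonicity formula with an explicit error of order $\ep$, together with a separate convergence lemma showing $F_p^\ep(t)\to F_p(t)$ at regular values of $u_p$; the theorem then follows by contradiction letting $\ep\to 0$. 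Without these two steps --- or a genuinely new argument bounding the measure of the critical values of $u_p$ --- your proof does not close.
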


The proof of this theorem will occupy the rest of the section and, for the sake of exposition, we proceed with a series of steps of increasing difficulty and generality. In the first step (Subsection~\ref{sub:nocrit}), we will treat the case where $u$ has no critical points. As this is the {\em smooth flow} case, the proof here reduces to a Geroch--type computation, similar to the one outlined in~\cite[Section 3]{Ago_Maz_Oro_2}. The second step (Subsection~\ref{sub:sard}) deals with the case where $u$ has a negligible set of critical values. This step constitutes the core of our analysis, as it faces the major conceptual and geometric difficulties caused by the presence of critical points. To understand this, one should consider that -- unlike for harmonic functions -- no {\em a priori} bound is available for the Hausdorff dimension of the critical set of a $p$--harmonic function, when $n \geq 3$ (the only known result is about $p$--harmonic functions in the plane and it is due to Alessandrini~\cite{Aless} and Manfredi~\cite{Manfredi}). As a consequence, the critical points of $u$ might be even arranged in subsets of positive top--dimensional measure. From the point of view of the level sets flow, these clusters of critical points would correspond to jumps, similar to the ones experienced by the weak inverse mean curvature flow of Huisken and Ilmanen. As such, they represent the most serious geometrical obstacle to the extension of the monotonicity formula of Subsection~\ref{sub:nocrit} to the weak flow. Also notice that integrating $|\na u|$ on the critical set $\mathrm{Crit}(u) = \{ x \in M \,:\, \na u (x) = 0 \}$ and using the coarea formula, it is immediate to deduce that the set $\{ \tau \in [0,1) \, : \, \mathscr{H}^{n-1} (\mathrm{Crit}(u) \cap \{u = \tau \} ) > 0 \,\}$ must have zero measure. Hence, the assumption of negligible critical values is by no means sufficient to exclude jumps along the flow. If $p=2$, one can get rid of this assumption by using Sard theorem, since harmonic functions are smooth. On the other hand, when $p\neq 2$, the optimal regularity available for $p$--harmonic functions is only $\mathscr{C}^{1,\beta}$, which is clearly not enough to invoke Sard theorem. Albeit it is a common belief that critical values of $p$--harmonic functions should not have positive measure, this possibility is not excluded by any result in the literature, so far. We will overcome this technical issue in the third step (Subsection~\ref{sub:approx}), by using an approximation scheme originally introduced by Di~Benedetto~\cite{DiBenedetto1}.

Apart from the case where the $p$--capacitary potential $u$ has no critical points, so that the level sets are all immediately diffeomorphic to the boundary $\partial M=\{u=0\}$, in treating the other two steps, we will employ the triviality of the second relative homology group. Thanks to an argument similar to the one of~\cite[Lemma~4.2]{HI}, this will ensure that any level set of $u$ is connected, even in the presence of critical points. It is important to notice that the condition $H_2(M, \pa M ; \mathbb{Z}) = \{ 0\}$ is automatically satisfied under the natural assumption that $(M,g)$ is an asymptotically flat {\em exterior region}, as observed in~\cite[Lemma~4.1]{HI}.

\subsection{Monotonicity without critical points.}\label{sub:nocrit}

With the help of the Bochner formula, the twice contracted Gauss equation and formulas~\eqref{eqff1000},~\eqref{eqff1001}, the divergence of the vector field $X$ in formula~\eqref{X_p} can be expressed as 
\begin{align}
\mathrm{div}\,X = \frac{c_p^{\frac{p-1}{3-p}} |\na u | }{\left[ \,\frac{3-p}{p-1}\, (1-u) \right]^{\frac{p-1}{3-p} +1}} \,&\left\{\frac{|\na u|^{p-1}}{c_p^{p-1}} -\frac{\,{\rm{R}}^{\Sigma}}{2}+ \frac{\vert\,\nabla^{\Sigma}\vert \nabla u\vert\,\vert^{2}}{\vert \nabla u \vert^{2}}+\frac{\Ro}{2}+\frac{\,\vert \ringg{\mathrm{h}}\vert^{2}}{2}
\phantom{\frac{\vert \nabla u \vert}{\phantom{\Bigl[}\,\frac{3-p}{p-1}\, (1-u) \phantom{\Bigr]}^{\phantom{1}}}} \quad \right.\\
&\left. 
\qquad
\qquad\qquad\,
+\,\frac{5-p}{p-1}\,\left( \frac{\vert \nabla u \vert}{\phantom{\Bigl[}\,\frac{3-p}{p-1}\, (1-u) \phantom{\Bigr]}^{\phantom{1}}}\,-\,\frac{\mathrm{H}}{2} \right)^{\!\!\!2} \,\,\right\}\label{div(Xp)geom}
\end{align}
where $\RRR^{\Sigma}(x)$, $\na^\Sigma$ and $\ringg{\mathrm{h}}(x)$ represent the scalar curvature, the Levi--Civita connection and the trace--free second fundamental form of the regular level set $\Sigma=\{u = u(x) \}$ passing through the point $x\in M$. Also notice that in order to establish the above formula, one can employ the following Kato--type identity for $p$--harmonic functions,
\begin{equation*}
|\na\na u|^2 - \left[ 1 + \frac{(p-1)^2}{2} \right]|\na|\na u||^2 \,\,= \,\,|\na u|^2 \big\vert \ringg{\mathrm{h}}\,\big\vert^{2} + \left[ 1 - \frac{(p-1)^2}{2} \right] |\na^\Sigma|\na u||^2 \,.
\end{equation*}
Let us observe that if $u$ has no critical points, then all the values in the range of $u$ are regular, and the monotonicity can be easily deduced by means of the following computation, using the divergence theorem, the coarea formula and identity~\eqref{div(Xp)geom}. 
\begin{align*}
F_p(t)- F_p(s) =&\,\!\!\!\int\limits_{\{u=\alpha_{p}(t)\}} \!\!\! \left\langle X , \frac{\na u}{|\na u|}\right\rangle\,d\sigma \,\,- \!\!\!\!\!\!\int\limits_{\{u=\alpha_{p}(s)\}} \!\!\! \left\langle X , \frac{\na u}{|\na u|}\right\rangle\,d\sigma \,\\
=&\,\!\!\!\int\limits_{\{\alpha_p(s) < u < \alpha_{p}(t)\}} \!\!\! \!\!\! \mathrm{div}\,X \,d\mu \,=\!\!\!\int\limits_{(\alpha_p(s) , \alpha_{p}(t))} \!\!\!\!\!\!\,d\tau \int\limits_{\{ u= \tau \}}\!\!\! \frac{\mathrm{div}\,X }{|\na u|} \,d\sigma \,\\
=&\,\!\!\!\int\limits_{(\alpha_p(s) , \alpha_{p}(t))} \!\!\! \frac{c_p^{\frac{p-1}{3-p}}\,d\tau }{\left[ \,\frac{3-p}{p-1}\, (1-\tau) \right]^{\frac{p-1}{3-p} +1}} \int\limits_{\{ u= \tau \}}\!\!\! \biggl( \frac{|\na u|^{p-1}}{c_p^{p-1}} -\frac{\,\rm{R}^{\Sigma}}{2} \biggr)\,d\sigma\\
&\,+ \!\!\!\int\limits_{(\alpha_p(s) , \alpha_{p}(t))} \!\!\! \frac{c_p^{\frac{p-1}{3-p}}\,d\tau }{\left[ \,\frac{3-p}{p-1}\, (1-\tau) \right]^{\frac{p-1}{3-p}+1}} \int\limits_{\{ u= \tau \}}\!\!\! \biggl( \frac{\vert\,\nabla^{\Sigma}\vert \nabla u\vert\,\vert^{2}}{\vert \nabla u \vert^{2}}+\frac{\Ro}{2}+\frac{\,\vert \ringg{\mathrm{h}}\vert^{2}}{2} \biggr) 
\,d\sigma\\
&\,+ \!\!\!\int\limits_{(\alpha_p(s) , \alpha_{p}(t))} \!\!\! \frac{c_p^{\frac{p-1}{3-p}}\,d\tau }{\left[ \,\frac{3-p}{p-1}\, (1-\tau) \right]^{\frac{p-1}{3-p} +1}} \int\limits_{\{ u= \tau \}}\!\!\!
\frac{5-p}{p-1}\,\left( \frac{\vert \nabla u \vert}{\phantom{\Bigl[}\,\frac{3-p}{p-1}\, (1-u) \phantom{\Bigr]}^{\phantom{1}}}\,-\,\frac{\mathrm{H}}{2}\right)^{\!\!\!2}d\sigma\\
 \geq&\,\!\!\!\int\limits_{(\alpha_p(s) , \alpha_{p}(t))} \!\!\! \frac{c_p^{\frac{p-1}{3-p}} \,\left[ \,4 \pi - 2 \pi \chi\big( \{ u = \tau \} \big) \right]_{\phantom{1}} }{\left[ \,\frac{3-p}{p-1}\, (1-\tau) \right]^{\frac{p-1}{3-p}+1}} \,d\tau \,\,\geq \,\,0 \,,
\end{align*}
Observe that in the last passage, we used the identities~\eqref{eq24} and~\eqref{eqcp} in combination with the Gauss--Bonnet theorem, obtaining the Euler characteristic $\chi\big( \{ u = \tau \} \big)$ of the level set $\{ u = \tau \}$. The conclusion follows since, in the absence of critical points, all the level sets are diffeomorphic to the boundary $\partial M=\{u=0\}$. As the latter is a smooth connected closed surface, one has that $4\pi - 2\pi \chi(\{ u={\tau} \}) \geq 0$, for every $\tau \in [0,1)$.

\subsection{Monotonicity with negligible critical values.}\label{sub:sard}

Let us now consider the case where the solution $u$ of problem~\eqref{f1} is allowed to have a nonempty set of critical points ${\mathrm{Crit}(u)}$. Before dealing with the general case, let us first prove the monotonicity of $F_p$ under the favorable assumption that the set of the critical values of $u$ is a negligible set. As already observed, this case contains all the major conceptual and geometric difficulties caused by the presence of critical points. In particular, we are going to show how the monotonicity formula of Subsection~\ref{sub:nocrit} still holds despite the possible presence of jumps along the flow. To make the computations simpler, it is convenient to set
\begin{align}\label{Y_ep}
Y & = \frac{c_p^{\frac{p-1}{3-p}}}{\left[ \frac{3-p}{p-1}(1-u) \right]^{\frac{p-1}{3-p}}} 
\left\{ \frac{ \nabla |\nabla u| - \frac{\Delta u}{|\na u|}\nabla u}{\phantom{\bigl[}\,\frac{3-p}{p-1}\, (1-u) \phantom{\Bigr]}^{\phantom{1}}} \,+ \,\frac{|\na u| \na u}{\left[ \frac{3-p}{p-1}(1-u) \right]^2} \right\}\\
& = \frac{c_p^{\frac{p-1}{3-p}}}{\left[ \frac{3-p}{p-1}(1-u) \right]^{\frac{p-1}{3-p}}} 
\left\{ \frac{ \nabla^\top |\nabla u| + (p-1) \na^\perp |\na u|} 
{\phantom{\bigl[}\,\frac{3-p}{p-1}\, (1-u) \phantom{\Bigr]}^{\phantom{1}}} \,+ \,\frac{|\na u|^2 }{\left[ \frac{3-p}{p-1}(1-u) \right]^2} \,\frac{\na u}{|\na u|} \right\} \,, 
\end{align}
where $\na^\top |\na u|$ and $\na^\perp |\na u|$ denote the tangential and the normal component of $\na|\na u|$, respectively. Observe that $\na^\top = \na^\Sigma$, by the well known properties of the Levi--Civita connection. Noticing that the vector fields $X$ and $Y$ are {well defined where $\na u \neq 0$ and that they are related by the formula
\begin{equation*}
X \,\,= \,\,c_p^{-\frac{(2-p)(p-1)}{(3-p)}} \frac{ |\na u|^{p-2} \na u }{\left[ \frac{3-p}{p-1}(1-u) \right]^{\frac{p-1}{3-p}}} \,\,+ \,\,Y \,,
\end{equation*}
it is immediate to observe}
\begin{align}
\mathrm{div}\,Y= \frac{c_p^{\frac{p-1}{3-p}} |\na u | }{\left[ \frac{3-p}{p-1}(1-u) \right]^{\frac{p-1}{3-p}+1}} \,&\left\{ -\frac{\,{\rm{R}}^{\Sigma}}{2} + \frac{\vert\,\nabla^{\top}\vert \nabla u\vert\,\vert^{2}}{\vert \nabla u \vert^{2}}+\frac{\Ro}{2}+\frac{\,\vert \ringg{\mathrm{h}}\vert^{2}}{2}
\phantom{\frac{\vert \nabla u \vert}{\left[ \frac{3-p}{p-1}(1-u) \right]^{\phantom{1}}}} \quad \right.\\
& \left. \,\,\,\,\, + \frac{5-p}{p-1}\,\left( \frac{\vert \nabla u \vert}{\phantom{\Bigl[}\,\frac{3-p}{p-1}\, (1-u) \phantom{\Bigr]}^{\phantom{1}}}\,-\,\frac{\mathrm{H}}{2}\right)^{\!\!\!2} \,\,\right\}. \quad\label{div(Yp)geom}
\end{align}
We now consider a sequence of smooth, nondecreasing cut--off functions $\eta_k:[0,+\infty) \to [0,1]$, with the following structural properties
\begin{equation}
\eta_k (\tau) \equiv 0\quad \text{in $\Bigl[0 \,,\frac{1}{2k}\,\Bigr]$}\,,\qquad 
0\leq \eta_k'(\tau)\leq 2 k\quad \text{in $\Bigl[\,\frac{1}{2k}\,,\frac{3}{2k}\,\Bigr]$},
\qquad\eta_k(\tau) \equiv 1\quad\text{in $\Bigl[\,\frac{3}{2k} \,,+\infty\!\Bigr)$}\,,
\end{equation}
for every $k \in \mathbb{N}$. It is immediate to realize that the functions $\eta_k$ monotonically converge to the characteristic function of $(0,+\infty)$ in the pointwise sense.
Using these cut--off functions, we introduce the  vector fields
\begin{equation}
Y_{k}\,= \,\,\eta_{{k}}\!\left( \!\frac{ (p-1) \vert \nabla u\vert}{\left[ \frac{3-p}{p-1}(1-u) \right]^{\frac{1}{3-p}}} \!\right) \,Y \,,\,
\end{equation}
which are well defined and smooth on the whole manifold. It is readily checked that, for every $x \in M\setminus \mathrm{Crit}(u)$, the sequence of vectors $Y_{k}(x)$ converges by construction to $Y(x)$, as $k \to + \infty$. An easy computation yields
\begin{align*}
\mathrm{div}\,Y_{k}\, = \, &\,\,\eta_{{k}}\!\left( \!\frac{ (p-1)\vert \nabla u\vert}{\left[ \frac{3-p}{p-1}(1-u) \right]^{\frac{1}{3-p}}} \!\right) \,\mathrm{div}\,Y\\[-1em]
& \,+\, \eta'_{{k}}\!\left( \!\frac{ (p-1) \vert \nabla u\vert}{\left[ \frac{3-p}{p-1}(1-u) \right]^{\frac{1}{3-p}}} \!\right) 
c_p^{\frac{p-1}{3-p}} 
\frac{ (p-1) |\na^\top|\na u||^2 + \left| (p-1) \na^\perp |\na u| + \frac{|\na u| \na u}{\,\,\,\left[ \frac{3-p}{p-1}(1-u) 
\right]^{\phantom{1}}_{\phantom{1}}}\right|^2 }{\left[ \frac{3-p}{p-1}(1-u) \right]^{\frac{3}{3-p}}}\\
 \geq \, &\,\eta_{{k}}\!\left( \!\frac{ (p-1)\vert \nabla u\vert}{\left[ \frac{3-p}{p-1}(1-u) \right]^{\frac{1}{3-p}}} \!\right) \,\mathrm{div}\,Y \,.
\end{align*} 
Thus, setting
\begin{equation*}
X_{k} \,\,= \,\,c_p^{-\frac{(2-p)(p-1)}{(3-p)}} \frac{ |\na u|^{p-2} \na u }{\left[ \frac{3-p}{p-1}(1-u) \right]^{\frac{p-1}{3-p}}}\,\,+ \,\,Y_{k} \,,
\end{equation*} 
one immediately gets
\begin{equation*}
\mathrm{div}\,X_{k} \, \geq \,\frac{ c_p^{-\frac{(2-p)(p-1)}{(3-p)}} |\na u |^p }{\left[ \frac{3-p}{p-1}(1-u) \right]^{\frac{p-1}{3-p}+1}} \,
+ \eta_{{k}}\!\left( \!\frac{ (p-1)\vert \nabla u\vert}{\left[ \frac{3-p}{p-1}(1-u) \right]^{\frac{1}{3-p}}} \!\right) \,\mathrm{div}\,Y \,.
\end{equation*} 
Now, we let $s,t\in [t_p, + \infty)$ be as in the statement of Theorem~\ref{thm:monotonicity}. As for large enough $k\in\N$ the vector field $X_{k}$ coincides with $X$ on the boundary of $\{\alpha_p(s) < u <\alpha_p(t) \}$, the divergence theorem yields
\begin{align*}
F_p(t) - F_p(s) & = \!\!\!\int\limits_{\{u=\alpha_{p}(t)\}} \!\!\! \left\langle X_{k} , \frac{\na u}{|\na u|}\right\rangle d\sigma \,\,- \!\!\!\!\!\!\int\limits_{\{u=\alpha_{p}(s)\}} \!\!\! \left\langle  X_{k} , \frac{\na u}{|\na u|}\right\rangle d\sigma \,\\
& =\!\!\!\int\limits_{\{\alpha_p(s) < u < \alpha_{p}(t)\}} \!\!\! \!\!\! \mathrm{div}\,X_{k} \,d\mu \,\\
&\geq \!\!\int\limits_{\{\alpha_p(s) < u < \alpha_{p}(t)\}} \!\!\! \left\{\frac{ c_p^{-\frac{(2-p)(p-1)}{(3-p)}} |\na u |^p }{\left[ \frac{3-p}{p-1}(1-u) \right]^{\frac{p-1}{3-p}+1}} \,
+ \eta_{{k}}\!\left( \!\frac{ (p-1)\vert \nabla u\vert}{\left[ \frac{3-p}{p-1}(1-u) \right]^{\frac{1}{3-p}}} \!\right) \,\mathrm{div}\,Y \right\} \,d\mu\,. 
\end{align*}
We now claim that the rightmost hand side  converges to 
\begin{equation*}
\int\limits_{\{\alpha_p(s) < u < \alpha_{p}(t)\} } \!\! \left\{\frac{ c_p^{-\frac{(2-p)(p-1)}{(3-p)}} |\na u |^p }{\left[ \frac{3-p}{p-1}(1-u) \right]^{\frac{p-1}{3-p}+1}} \,+ \,\mathbb{I}_{M\setminus\mathrm{Crit}(u)} \,\mathrm{div}\,Y \right\} \,d\mu \,,
\end{equation*}
as $k \to + \infty$, where $\mathbb{I}_{M\setminus\mathrm{Crit}(u)}$ denotes the characteristic function of $M\setminus\mathrm{Crit}(u)$. Indeed, using the Gauss equation in combination with formulas~\eqref{eqff1000} and~\eqref{eqff1001}, we can rework formula~\eqref{div(Yp)geom} to obtain $\mathrm{div}\,Y=P+D$, with
\begin{align}
P&=\frac{c_{p}^{\frac{p-1}{3-p}} |\na u | }{\left[ \,\frac{3-p}{p-1}\, (1-u) \right]^{\frac{p-1}{3-p} +1}} \biggl[\,\frac{\vert\,\nabla^{\top}\vert \nabla u\vert\vert^{2}}{|\na u|^2}\, +\,\vert \ringg{\mathrm{h}}\vert^{2}\,+\frac{\mathrm{H}^{2}}{2}+\,(2-p)(p-1)\,\frac{\big\langle\na |\na u|,\na u\big\rangle^2}{\vert \nabla u\vert^{4}}\biggr]\\
&=\frac{c_{p}^{\frac{p-1}{3-p}} |\na u | }{\left[ \frac{3-p}{p-1}(1-u) \right]^{\frac{p-1}{3-p} +1}} \biggl[\,\frac{\vert\,\nabla^{\top}\vert \nabla u\vert\vert^{2}}{|\na u|^2}+\vert \ringg{\mathrm{h}}\vert^{2}+\frac{(p-1)(3-p)}{2}\,\frac{\big\langle\na |\na u|,\na u\big\rangle^2}{\vert \nabla u\vert^{4}}\biggr]\\
D&=\frac{c_{p}^{\frac{p-1}{3-p}} |\na u | }{\left[ \,\frac{3-p}{p-1}\, (1-u) \right]^{\frac{p-1}{3-p} +1}}\biggl[\frac{\Ric(\na u,\na u)}{|\na u|^2}+\frac{(p-1)(5-p)}{(3-p)^{2}}\,\biggl(\frac{\vert \nabla u\vert^{2}}{(1-u)^{2}}+(3-p)\frac{\big\langle\na |\na u|,\na u\big\rangle}{(1-u)|\na u|}\,\biggr)\biggr] \, .
\end{align}
As usual, the above identities are valid outside ${\rm Crit}(u)$. We now notice that the term $P$ is nonnegative, while $D$ is bounded on every compact subset of $M$. The claim follows by applying the dominate convergence theorem to the sequence $\eta_k \, D$ and the monotone convergence theorem to the sequence $\eta_k\, P$. 

To complete our argument, we are now going to use the assumption that $\mathcal{N} = u({\rm Crit}(u))$ is negligible. 
Using the coarea formula, the identity~\eqref{div(Yp)geom} and the Gauss--Bonnet theorem, we can further develop the above computation, getting
\begin{align}
F_p(t) - F_p(s) & \geq \!\!\!\int\limits_{(\alpha_p(s) , \alpha_{p}(t)) } \!\!\! d\tau \int\limits_{\{ u= \tau \}}\! \left\{\frac{ c_p^{-\frac{(2-p)(p-1)}{(3-p)}} |\na u |^{p-1} }{\left[ \frac{3-p}{p-1}(1-u) \right]^{\frac{p-1}{3-p}+1}} \,
+ \,\mathbb{I}_{M\setminus\mathrm{Crit}(u)} \,\frac{\mathrm{div}\,Y}{|\na u|} \right\} d\sigma\\
& = \!\!\!\int\limits_{(\alpha_p(s) , \alpha_{p}(t))\setminus \mathcal{N} } \!\!\! d\tau \int\limits_{\{ u= \tau \}}\! \left\{\frac{ c_p^{-\frac{(2-p)(p-1)}{(3-p)}} |\na u |^{p-1} }{\left[ \frac{3-p}{p-1}(1-u) \right]^{\frac{p-1}{3-p}+1}} \,
+ \,\frac{\mathrm{div}\,Y}{|\na u|} \right\}\, d\sigma\\
& = \!\!\!\int\limits_{(\alpha_p(s) , \alpha_{p}(t))\setminus \mathcal{N} } \!\!\! d\tau \int\limits_{\{ u= \tau \}}\!\!\! \,\frac{\mathrm{div}\,X }{|\na u|}\,d\sigma\\
& \geq \!\!\!\int\limits_{(\alpha_p(s) , \alpha_{p}(t)) \setminus \mathcal{N}} \frac{c_p^{\frac{p-1}{3-p}} d\tau }{\left[ \frac{3-p}{p-1}(1-\tau) \right]^{\frac{p-1}{3-p}+1}} \int\limits_{\{ u= \tau \}}\!\!\! \biggl(\frac{|\na u|^{p-1}}{c_p^{p-1}} -\frac{\,\rm{R}^{\Sigma}}{2} \biggr)\,d\sigma\\
& = \!\!\!\int\limits_{(\alpha_p(s) , \alpha_{p}(t))\setminus \mathcal{N}} \!\!\! \frac{c_p^{\frac{p-1}{3-p}} \,\left[ \,4 \pi - 2 \pi \chi\big( \{ u = \tau \} \big) \right]_{\phantom{1}} }{\left[ \frac{3-p}{p-1} (1-\tau) \right]^{\frac{p-1}{3-p}+1}} \,d\tau \,\,\geq \,\,0 \,.\label{eqcar999}
\end{align}
The last inequality holds because all the regular level sets of $u$ are closed and {\em connected} surfaces, so that $4\pi - 2\pi \chi(\{ u={\tau} \}) \geq 0$, for every $\tau \in [0,1)\setminus \mathcal{N}$. 

\medskip

For the reader's convenience, we now include an argument, inspired by the one presented in~\cite[Lemma~4.2]{HI}, showing that the assumption $H_2(M, \pa M;\Z)=\{0\}$ is sufficient to infer the connectedness of the regular level sets of $u$. 

We let $\tau \in (0,1)$ be a regular value of $u$, and we consider the regular level set $\{ u= \tau \}$. We first observe that the sub--level set $\{ u < \tau \}$ is necessarily connected. More precisely, we claim that each connected component of $\{ u < \tau \}$ has a nonempty intersection with $\pa M = \{u=0\}$. Indeed, by the properness of $u$, every connected component of $\{u<\tau\}$ must be bounded. Now, if some of these connected components would not intersect $\pa M$, then 
$u$ would achieve an interior minimum on it, which is forbidden by the strong maximum principle (see~\cite[Theorem~6.5]{HKO}). As $\pa M$ is connected, it follows that $\{u<\tau\}$ is also connected. 

Let now $S \subseteq \{ u = \tau \}$ be a connected component of $\{ u = \tau \}$. As $H_2(M, \pa M;\Z)=\{0\}$, we have that the surface $S$ is homologous to (an integer multiple of) $\partial M$. We claim that $S$ is disconnecting $M$ into two connected components $A$ and $B$, so that $M = A \sqcup S \sqcup B$. In fact, if $M \setminus S$  would have a unique connected component, then there would exist a simple closed loop $\gamma$ intersecting transversally $S$ at a single point, with $\gamma \cap \pa M = \emptyset$. This is forbidden by the {\em intersection theory} (see~\cite[Chapter~6,~Section~11]{Bredon}), as the parity of the intersection with a given loop should be (generically) the same, within a given homology class. Hence, the claim is proven, and we have the decomposition $M = A \sqcup S \sqcup B$. 

Without loss of generality, we may assume that $\pa M \subseteq A$. By the previous reasoning, this implies that the sub--level set $\{ u < \tau \}$ is also contained in $A$, since both $A$ and $\{ u < \tau \}$ are open and connected and they both contain $\pa M$. In particular, we have that $\{ u = \tau \} = \pa \{u < \tau\} \subseteq A \sqcup S$, where the first equality holds because $\tau$ is a regular value of $u$. Now suppose by contradiction that $\{u = \tau \}$ contains another connected component $\Sigma$, with $\Sigma \cap S = \emptyset$. By virtue of the latter condition, $\Sigma$ lies necessarily in the interior of $A$. Hence, at any $x \in \Sigma \subseteq A$, we have that 
$$
u (x) \, = \,\tau \, = \, \max_{\pa A} u \, . 
$$
In other words, the point $x$ is an interior maximum for $u$ in $A$. Once again, this forbidden by the strong maximum principle (see~\cite[Theorem~6.5]{HKO}). Then, the only possibility is that $S$ is the unique connected component of $\{ u = \tau \}$.

\subsection{An approximate monotonicity formula.}\label{sub:approx}

To achieve the general result, we are going to locally approximate the $p$--capacitary potential $u_p$, solving problem~\eqref{f1}, with a family of smooth functions, solving a perturbed version of such problem. More concretely, we fix $T\in(0,1)$ such that $\{u_p= T \}$ is a regular level set of $u_p$,  and for every $\ep>0$, we consider the (unique) solution $u_{p}^{\ep}$ to the following problem
\begin{equation}\label{f2}
\begin{cases}
{\mathrm{div}\,}\bigl( |\na u|_\ep^{p-2} \na u \bigr)=0 &\mathrm{in} \ M_T={\{0\leq u_p\leq T\}} \, ,\\
\qquad \qquad\qquad \,\,\,\,\,u=0 &\mathrm{on} \ \partial M \, ,\\
\qquad \qquad\qquad\,\,\,\,\,u = T &\mathrm{on} \ \{u_p= T\} \, ,
\end{cases}
\end{equation}
where $|\na u|_\ep = \sqrt{|\na u|^2 + \ep^2}$. The functions $u_{p}^{\ep}$ were first introduced in~\cite{DiBenedetto1,DiBenedetto2} to establish the nowadays classical $\mathscr{C}^{1,\beta}$--regularity result for $p$--harmonic functions, as they actually converge in the $\mathscr{C}^{1,\beta}$--topology on the compact subsets of $M_T$ to the (a priori only) $W^{1,p}$--solution $u_p$ of problem~\eqref{f1}, when $\ep\to0$. In the same papers it is also proven that they converge {\em smoothly} to $u_p$ on the compact subsets of $M_T \setminus\mathrm{Crit}(u_p)$. To list some of the basic properties of the functions $u_p^\ep$, we observe that they satisfy a  nondegenerate quasilinear elliptic equation, with smooth coefficients, in divergence form. As such, the weak and the strong maximum principle as well as the Hopf lemma are in force, as it is proven in~\cite{serrin1970}. Using these fundamental tools, one can prove that the functions $u_p^\ep$ are smooth up to the boundary (see, e.g.~\cite{Lieb88}), that they take values in $[0,T]$ and that their gradient is never vanishing on the level sets $\{u_{p}^\ep=0\}=\partial M$ and $\{u_{p}^\ep=T\}=\{u_p=T\}$.

From our perspective, the key advantage of working with the functions $u_p^\ep$ instead of $u_p$ comes from the fact that they are smooth, hence, Sard theorem applies and the set of critical values is negligible. Adapting the  procedure described in the previous subsection, we eventually establish the validity of an {approximate monotonicity formula} in Lemma~\ref{thm:approx}. Theorem~\ref{thm:monotonicity}, will then be achieved in Subsection~\ref{sub:proof}, letting $\ep \to 0$.

To keep the notations simpler, we drop the subscripts $p$ and $\ep$, whenever there is no possibility of confusion and we simply write $u$ for $u_{p}^{\ep}$. In analogy with formula~\eqref{X_p}, {we define the following vector fields
\begin{equation}\label{X_pep}
X_\ep = \frac{c_{p,\ep}^{\frac{p-1}{3-p}}}{\left[ \,\frac{3-p}{p-1}\, (1-u) \right]^{\frac{p-1}{3-p}}} 
\left\{ \frac{|\na u|_\ep^{p-2} \na u}{c_{p, \ep}^{p-1}} + \frac{ \nabla |\nabla u| - \frac{\Delta u}{|\na u|}\nabla u}{\phantom{\Bigl[}\,\frac{3-p}{p-1}\, (1-u) \phantom{\Bigr]}^{\phantom{2}}} \,+ \,\frac{|\na u| \na u}{\left[ \,\frac{3-p}{p-1}\, (1-u) \right]^2} \right\} 
\end{equation}
where $\na u_{p}^{\ep}\neq 0$,} with the constant $c_{p,\ep}$ given by
\begin{equation}\label{eqcpep}
c_{p,\ep}^{p-1}\,=\,\frac{1}{4\pi} \int\limits_{\pa M} |\na u|_\ep^{p-2} |\na u| \,\,d \sigma 
\,= \,\frac{1}{4\pi} \int\limits_{\{u=\alpha^{\varepsilon}_{p}(t)\}} |\na u|_\ep^{p-2} |\na u| \,\,d \sigma \,.
\end{equation}
Notice that the second equality holds for every $t$ such that $\alpha_p^\ep(t)$ is a regular value of $u$, by divergence theorem. In analogy with formula~\eqref{defFp}, we introduce the functions
\begin{equation}\label{defFpep}
F_{p}^{\ep}(t)\,= \!\!\!\int\limits_{\{u=\alpha_{p}^\ep(t)\}} \!\!\! \left\langle X_\ep , \frac{\na u}{|\na u|}\right\rangle d\sigma \,,
\end{equation}
where 
\begin{equation}
\alpha_{p}^\ep(t)\,=\,1-\left(\frac{t_p^\ep}{t}\right)^{\!\!\frac{3-p}{p-1}} ,\qquad \hbox{with} \qquad t_p^\ep = \Big(c_{p,\ep}\,\,\frac{p-1}{3-p}\,\Big)^{\!\frac{p-1}{3-p}},
\end{equation}
and the variable $t$ ranges in $\big[\,t_p^\ep, t_p^\ep(1-T)^{\frac{1-p}{3-p}}\big]$. The assignment $F_p^\ep(t)$ is clearly well defined, whenever $\alpha_p^\ep(t)$ is a regular value of $u$. In order to make the expression of $F_p^\ep$ more explicit, we observe that, away from the critical set, the equation ${\mathrm{div}\,}\bigl( |\na u|_\ep^{p-2} \na u \bigr)=0 $ is equivalent to
\begin{equation}\label{eqff1}
\Delta u\,=\,(2-p)\,\frac{|\na u|^2}{\,\,\vert \nabla u\vert^{2}_{\ep}}\, \frac{\big\langle\na |\na u|,\na u\big\rangle}{|\na u|} \, .
\end{equation}
As a consequence, the mean curvature $\HHH$ of a regular level set of $u$, computed with respect to the unit normal $\na u/|\na u|$, is given by
\begin{equation}\label{eqff1bis}
\HHH \,=\,\frac{\Delta u}{|\na u|}-\frac{\big\langle\na\vert \na u\vert, \na u\big\rangle}{|\na u|^2}\,\,=\,-\frac{(p-1)\vert \na u\vert^{2}+\ep^{2}}{\vert \nabla u\vert^{2} + \ep^2}\,\frac{\big\langle\na\vert \na u\vert, \na u\big\rangle}{|\na u|^2}\,.
\end{equation}
Using these identities, $F_p^\ep(t)$ can be written as
\begin{equation}\label{defFpep2}
F_{p}^{\ep}(t)\,= \,\,\,4\pi t \,-\,\frac{t^{\frac{2}{p-1}}}{c_{p,\varepsilon}} \!\!\int\limits_{\{u=\alpha^{\varepsilon}_{p}(t)\}} \!\!\!\! \vert \nabla u\vert \,\mathrm{H}\,d\sigma \,+ \,\frac{t^{\frac{5-p}{p-1}}}{c_{p,\varepsilon}^{2}} \!\int\limits_{\{u=\alpha^{\varepsilon}_{p}(t)\}} \!\!\!\vert \nabla u\vert^{2} \,d\sigma \,.
\end{equation}
The above expression should be compared with formula~\eqref{eqfp}.

We can now state our approximate monotonicity Lemma.

\begin{lemma}\label{thm:approx}
Let $(M,g)$ be a $3$--dimensional Riemannian manifold satisfying the assumptions of Theorem~\ref{thm:monotonicity} and let $u$ be the solution to problem~\eqref{f2}. Let $s$ and $t$ be real values such that  
$$
t_p^\ep \, < \, s \, \leq \, t \, <\, t_p^\ep(1-T)^{\frac{1-p}{3-p}} \ ,
$$
and such that $\alpha_p^\ep(s)$ and $\alpha_p^\ep (t)$ are regular values for $u$. Then, the following inequality holds
\begin{equation} 
F_{p}^{\ep}(t) - F_{p}^{\ep} (s) \,\geq \,- \,\,\ep \,\,\Big(\frac{p+1}{p-1}\Big)^{\!2} \!\!\!\! \!\!\!\!\!\!\!\! \int\limits_{\{\alpha_p^\ep(s) < u < \alpha_{p}^\ep(t) \}} \!\!\!\!\!\!\!\!\!\!\!\! \frac{\ep |\na u|}{2(p+1)|\na u|^2 + 3 \ep^2}\,\frac{ c_{p,\ep}^{\frac{p-1}{3-p}} \,|\na u|^2}{\left[ \,\frac{3-p}{p-1}\, (1-u) \right]^{\frac{p-1}{3-p} + 3}} \,d\mu\,,
\end{equation}
where $F_p^\ep$ is the function defined in formula~\eqref{defFpep}.
\end{lemma}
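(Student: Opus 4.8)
The strategy is to mimic the computation carried out in Subsection~\ref{sub:sard}, now applied to the \emph{smooth} solution $u = u_p^\ep$ of the regularized problem~\eqref{f2}, and to carefully track the extra terms produced by the presence of the parameter $\ep$. Since $u$ is smooth, Sard's theorem guarantees that the set $\mathcal{N}_\ep$ of critical values of $u$ is negligible, so the delicate cut--off argument of Subsection~\ref{sub:sard} can be run verbatim: introduce the analogue $Y_\ep$ of $Y$ (the vector field $X_\ep$ in~\eqref{X_pep} minus its radial $|\na u|_\ep^{p-2}\na u$ piece), cut it off with $\eta_k$ evaluated on the appropriate expression of $|\na u|$, observe that the cut--off contributes a nonnegative divergence term, apply the divergence theorem on $\{\alpha_p^\ep(s) < u < \alpha_p^\ep(t)\}$, and let $k \to +\infty$ using monotone/dominated convergence (splitting $\mathrm{div}\,Y_\ep = P_\ep + D_\ep$ as in the unperturbed case, with $P_\ep \geq 0$ and $D_\ep$ locally bounded). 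This reduces the problem to computing $\mathrm{div}\,X_\ep$ pointwise away from $\mathrm{Crit}(u)$ and identifying the sign of its various pieces.

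First I would recompute $\mathrm{div}\,X_\ep$ geometrically, using the Bochner formula, the twice--contracted Gauss equation, and the $\ep$--modified identities~\eqref{eqff1} and~\eqref{eqff1bis} in place of~\eqref{eqff1000} and~\eqref{eqff1001}. The expectation is that one obtains an expression structurally identical to~\eqref{div(Xp)geom}, namely a sum of a manifestly nonnegative ``Kato + trace--free second fundamental form + scalar curvature'' block, a Gauss--Bonnet block producing $4\pi - 2\pi\chi(\{u=\tau\})$ after integration on level sets (which is $\geq 0$ by connectedness of the regular level sets, guaranteed by $H_2(M,\pa M;\Z) = \{0\}$ exactly as in Subsection~\ref{sub:sard}), plus \emph{error terms carrying explicit powers of $\ep$}. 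Collecting these error terms and integrating with the coarea formula over $(\alpha_p^\ep(s),\alpha_p^\ep(t)) \setminus \mathcal{N}_\ep$ should yield precisely the claimed lower bound; in particular the weight $\ep|\na u|/(2(p+1)|\na u|^2 + 3\ep^2)$ and the factor $c_{p,\ep}^{(p-1)/(3-p)}|\na u|^2 / [\tfrac{3-p}{p-1}(1-u)]^{(p-1)/(3-p)+3}$ should emerge from the bookkeeping. I would also need to check that the $\ep$--version of the Kato identity for solutions of~\eqref{f2} still provides enough nonnegativity, absorbing the genuinely good terms and leaving only the controlled $\ep$--remainder on the negative side.

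The main obstacle is the algebra of this $\ep$--perturbed divergence computation: unlike the clean $p$--harmonic case, every application of the equation~\eqref{eqff1} introduces the factor $|\na u|^2 / |\na u|_\ep^2 = |\na u|^2/(|\na u|^2 + \ep^2)$, so the Bochner/Gauss identity no longer collapses into a perfect sum of squares, and one must identify exactly which cross terms are nonnegative, which combine into the Hawking--type and Gauss--Bonnet structures, and which must be estimated and relegated to the right--hand side. The key quantitative point will be to show that all the $\ep$--dependent terms that fail to have a good sign can be bounded below by $-\,\ep\,\big(\tfrac{p+1}{p-1}\big)^2$ times the stated integrand; this presumably uses an elementary inequality of the form $ab \geq -\tfrac{\ep}{2}(a^2/\ep + \ep b^2)$ applied to the relevant pair (the tangential/normal gradient terms against the $\ep^2$ correction in~\eqref{eqff1bis}), together with the observation that on $M_T = \{0 \leq u_p \leq T\}$ the weight $1/[\tfrac{3-p}{p-1}(1-u)]$ is bounded. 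Once the pointwise inequality $\mathrm{div}\,X_\ep \geq (\text{nonneg.}) - \ep\,(\cdots)$ is in hand, the rest is the routine coarea/Gauss--Bonnet passage already displayed in~\eqref{eqcar999}, and the Lemma follows.
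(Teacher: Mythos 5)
Your plan follows the same route as the paper (regularize, exploit smoothness of $u_p^\ep$ and Sard, cut off the non--radial part of $X_\ep$ with $\eta_k$, split $\mathrm{div}\,Y_\ep=P_\ep+D_\ep$, pass to the limit $k\to+\infty$, then coarea, Gauss--Bonnet and connectedness of the regular level sets), but two concrete points are missing or wrong. First, your assertion that ``the cut--off contributes a nonnegative divergence term'' is false in the regularized setting: because of the identity~\eqref{eqff1bis}, the field $Y_\ep$ carries the extra summand proportional to $(2-p)\,\ep^2(|\na u|^2+\ep^2)^{-1}\na^\perp|\na u|$, and the $\eta_k'$--contribution to $\mathrm{div}\,Y_{\ep,k}$ then contains a term proportional to $\langle\na|\na u|,\na u\rangle$ with no definite sign. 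The paper does not discard this term by positivity; it bounds it from below by an error of order $1/k$, using $\eta_k'(\tau)\,\tau^2\leq 9/(2k)$, the elementary bound $(2(p-1)|\na u|^2+p\ep^2)/(|\na u|^2+\ep^2)\leq 3p-2$, and the fact that $|\na\na u|$ is bounded on the compact set $M_T$ because $u_p^\ep$ is smooth up to the boundary; this uniform $O(1/k)$ error is what disappears when $k\to+\infty$. Without that estimate, the inequality you want after the divergence theorem does not follow from your argument as stated.

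Second, and more importantly, the quantitative heart of the lemma is the exact expression of $\mathrm{div}\,X_\ep$ for the field~\eqref{X_pep}, which you defer as ``the main obstacle'' and only sketch via a generic absorption inequality of Cauchy--Schwarz type. In the paper the bad term is not estimated but isolated exactly: after using~\eqref{eqff1} and~\eqref{eqff1bis}, one completes the square with the specific coefficients $a_\ep=(2(p+1)|\na u|^2+3\ep^2)^{-1/2}$ and $b_\ep=\sqrt{2(p+1)|\na u|^2+3\ep^2}\,/\,\big((p+1)|\na u|^2\big)$, so that the only negatively--signed piece of $\mathrm{div}\,X_\ep$ is precisely $-\ep^2\big(\tfrac{p+1}{p-1}\big)^2\big(2(p+1)|\na u|^2+3\ep^2\big)^{-1}|\na u|^2\big[\tfrac{3-p}{p-1}(1-u)\big]^{-2}$ times the common prefactor; integrating it via the coarea formula produces exactly the right--hand side of the lemma, with the weight $\ep|\na u|/(2(p+1)|\na u|^2+3\ep^2)$. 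Your proposal neither carries out this computation nor explains why a cruder absorption would still yield the \emph{stated} remainder (which is what the statement asserts, and what is then used, with its explicit $\ep$--prefactor and bounded weight, in the limiting argument of Theorem~\ref{thm:monotonicity}). As it stands, the essential algebraic content of the lemma is acknowledged but not supplied, so the proposal is a plausible outline rather than a proof.
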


\begin{proof} A long but straightforward computation, performed in the same spirit as the one leading to formula~\eqref{div(Xp)geom}, provide us with the expression for the divergence of $X_\ep$ away from the critical set,
\begin{align*}\label{div(Xep)geom}
\mathrm{div}X_\ep= \frac{c_{p,\ep}^{\frac{p-1}{3-p}} |\na u | }{\left[ \,\frac{3-p}{p-1}\, (1-u)  \right]^{\frac{p-1}{3-p} +1}}  \,&\left\{\frac{|\na u|_\ep^{p-2} |\na u|}{c_{p,\ep}^{p-1}} -\frac{\,{\rm{R}}^{\Sigma}}{2} + \frac{\vert\,\nabla^{\Sigma}\vert \nabla u\vert\,\vert^{2}}{\vert \nabla u \vert^{2}}+\frac{\Ro}{2}+\frac{\, \vert \mathring{\mathrm{h}}\vert^{2}}{2}  
\phantom{\frac{\vert \nabla u \vert}{\left[ \,\frac{3-p}{p-1}\, (1-u)  \right]^{\phantom{1}}}} \quad \right.\\
& + \frac{5-p}{p-1}\,\left( \frac{\vert \nabla u \vert}{\phantom{\Bigl[}\,\frac{3-p}{p-1}\, (1-u) \phantom{\Bigr]}^{\phantom{1}}} + \frac{p-1}{2}\,\frac{|\na u|^{2}}{|\na u|^{2}_{\ep}}\,\frac{\bigl\langle\na |\na u|,\na u\bigr\rangle}{|\na u|^{2}}\right)^{\!\!2}\\
& +\ep^2\, \bigg(\frac{p+1}{p-1}\bigg)^{\!\!2}\left( \frac{ a_\ep  \, \vert \nabla u \vert}{\phantom{\Bigl[}\,\frac{3-p}{p-1}\, (1-u) \phantom{\Bigr]}^{\phantom{1}}} + \frac{b_\ep \,  (p-1)}{2}
\frac{|\na u|^{2}}{|\na u|^{2}_{\ep}}\,\frac{\bigl\langle\na |\na u|,\na u\bigr\rangle}{|\na u|^{2}}\right)^{\!\!2}\\
&\left. -\ep^2 \bigg(\frac{p+1}{p-1}\bigg)^{\!\!2}   \,\frac{1}{2(p+1)|\na u|^2 + 3 \ep^2} \,\frac{ \vert \nabla u \vert^2}{\left[ \,\frac{3-p}{p-1}\, (1-u)  \right]^{2}}  \,\, \right\}\,,
\end{align*}
where the coefficients $a_\ep$ and $b_\ep$ are defined as
\begin{align*}
a_\ep  \, = \, \sqrt{\frac{ 1}{2(p+1) |\na u|^2 + 3 \ep^2}} \qquad \hbox{and} \qquad
b_\ep \,  = \,  \frac{\sqrt{2(p+1)|\na u|^2 + 3 \ep^2}}{(p+1) |\na u|^2}\,.
\end{align*}
It is easy to realize that, if $|\na u| \neq 0$ everywhere, then the thesis follows from a simple integration by parts, as in Subsection~\ref{sub:nocrit}. To treat the general case, it is convenient to proceed as in Subsection~\ref{sub:sard}, defining the vector field $Y_\ep$ as
\begin{align*}
Y_\ep & = \frac{c_{p,\ep}^{\frac{p-1}{3-p}}}{\left[ \,\frac{3-p}{p-1}\, (1-u)  \right]^{\frac{p-1}{3-p}}}   
\left\{   \frac{ \nabla |\nabla u|  -  \frac{\Delta u}{|\na u|}\nabla u}{\phantom{\Bigl[}\,\frac{3-p}{p-1}\, (1-u) \phantom{\Bigr]}^{\phantom{1}}}  \, + \, \frac{|\na u| \na u}{\left[ \,\frac{3-p}{p-1}\, (1-u)  \right]^2}  \right\}\\
& = \frac{c_{p,\ep}^{\frac{p-1}{3-p}}}{\left[ \,\frac{3-p}{p-1}\, (1-u)  \right]^{\frac{p-1}{3-p}}}   
\left\{   \frac{ \nabla^\top |\nabla u|  + (p-1) \na^\perp |\na u|} 
{\phantom{\Bigl[}\,\frac{3-p}{p-1}\, (1-u) \phantom{\Bigr]}^{\phantom{1}}}  \, + \, \frac{|\na u|^2 }{\left[ \,\frac{3-p}{p-1}\, (1-u)  \right]^2}  \, \frac{\na u}{|\na u|} \right.
\\
& \phantom{= \frac{c_p^{\frac{p-1}{3-p}}}{\left[ \,\frac{3-p}{p-1}\, (1-u)  \right]^{\frac{p-1}{3-p}}}  } 
\left. 
\phantom{ = \, \frac{c_p^{\frac{p-1}{3-p}}}{\left[ \,\frac{3-p}{p-1}\, (1-u)  \right]^{\frac{p-1}{3-p}}}   } 
\quad\qquad+ \frac{(2-p) \, \ep^2}{|\na u|^2 + \ep^2}\frac{ \na^\perp |\na u|} 
{\phantom{\Bigl[}\,\frac{3-p}{p-1}\, (1-u) \phantom{\Bigr]}^{\phantom{1}}} 
\right\} \, .
\end{align*}
Then, the fields $Y_\ep$ and $X_\ep$ are related through the formula
\begin{equation*}
X_\ep \,\,= \,\,c_{p, \ep}^{-\frac{(2-p)(p-1)}{(3-p)}} \frac{ |\na u|_\ep^{p-2} \na u }{\left[ \,\frac{3-p}{p-1}\, (1-u) \right]^{\frac{p-1}{3-p}}} \,\,+ \,\,Y_\ep \,.
\end{equation*}
Of course, the above definition makes sense only outside the critical set of $u$, however, using the cut--off functions $\eta_k$ introduced in Subsection~\ref{sub:sard}, we may consider, for every $k \in \mathbb{N}$, the vector fields
\begin{equation}
Y_{\ep,k}\, = \, \, \eta_{{k}}\!\left( \!\frac{ (p-1) \vert \nabla u\vert}{\left[ \,\frac{3-p}{p-1}\, (1-u)  \right]^{\frac{1}{3-p}}} \!\right) \, Y_\ep \, \quad \hbox{and} \quad  X_{\ep,k} \, = \, c_{p, \ep}^{-\frac{(2-p)(p-1)}{(3-p)}} \frac{ |\na u|_\ep^{p-2} \na u }{\left[ \,\frac{3-p}{p-1}\, (1-u)  \right]^{\frac{p-1}{3-p}}}\,\, + \, \, Y_{\ep,k} \,.
\end{equation}
Notice that $X_{\ep,k}$ and $Y_{\ep,k}$ are well defined and smooth on the whole $M_{T}$, as 
they are vanishing in a neighborhood of $\rm{Crit}(u)$. Computing as in Subsection~\ref{sub:sard}, we get
\begin{align}
\mathrm{div}&\,Y_{\ep,k}\, = \, \eta_{{k}}\!\left( \!\frac{ (p-1)\vert \nabla u\vert}{\left[ \frac{3-p}{p-1}(1-u) \right]^{\frac{1}{3-p}}} \!\right) \,\mathrm{div}\,Y_\ep \\[-1em]
\,+ \,& \, \eta'_{{k}}\!\left( \!\frac{ (p-1) \vert \nabla u\vert}{\left[ \frac{3-p}{p-1}(1-u) \right]^{\frac{1}{3-p}}} \!\right) 
c_{p,\ep}^{\frac{p-1}{3-p}} \,\, 
\frac{ (p-1) |\na^\top|\na u||^2 + \left| (p-1) \na^\perp |\na u| + \frac{|\na u| \na u}{\phantom{\bigl[}\,\frac{3-p}{p-1}\, (1-u) \phantom{\bigr]}^{\phantom{1}}}\right|^2 }{\left[ \frac{3-p}{p-1}(1-u) \right]^{\frac{3}{3-p}}}\\
\,+ \,& \, \eta'_{{k}}\!\left( \!\frac{ (p-1) \vert \nabla u\vert}{\left[ \frac{3-p}{p-1}(1-u) \right]^{\frac{1}{3-p}}} \!\right) 
c_{p,\ep}^{\frac{p-1}{3-p}} 
\frac{{\frac{(2-p) \ep^2}{|\na u|^2 + \ep^2}}\, \left(  \! (p-1)|\na^\perp|\na u||^2 +  \,\frac{|\na u| \,\left\langle\na |\na u| \,, \na u \right\rangle }{\phantom{\bigl[}\,\frac{3-p}{p-1}\, (1-u) \phantom{\bigr]}^{\phantom{1}}} \!\right) }{\left[ \frac{3-p}{p-1}(1-u) \right]^{\frac{3}{3-p}}}\\
&\quad\,\,= \, \eta_{{k}}\!\left( \!\frac{ (p-1)\vert \nabla u\vert}{\left[ \frac{3-p}{p-1}(1-u) \right]^{\frac{1}{3-p}}} \!\right) \,\mathrm{div}\,Y_\ep \\
\,+ \,&\,\eta'_{{k}}\!\left( \!\frac{ (p-1) \vert \nabla u\vert}{\left[ \,\frac{3-p}{p-1}\, (1-u) \right]^{\frac{1}{3-p}}} \!\right) 
{c_{p,\ep}^{\frac{p-1}{3-p}} }\,\,
\frac{    (p-1) |\na^\top|\na u||^2+\frac{\vert \nabla u\vert^{4}}{\left[ \,\frac{3-p}{p-1}\, (1-u) \right]^{2} }  + \frac{(p-1)^2|\na u|^2+(p-1)\ep^2}{|\na u|^2 + \ep^2} \, |\na^\perp|\na u||^2
}{ \left[ \,\frac{3-p}{p-1}\, (1-u) \right]^{\frac{3}{3-p}} } 
\\
\,+ \,&\,\eta'_{{k}}\!\left( \!\frac{ (p-1) \vert \nabla u\vert}{\left[ \,\frac{3-p}{p-1}\, (1-u) \right]^{\frac{1}{3-p}}} \!\right) 
{c_{p,\ep}^{\frac{p-1}{3-p}} }\,\, \frac{\,\,2(p-1)|\na u|^2+p\ep^2}{|\na u|^2 + \ep^2}\,\frac{|\na u| \,\left\langle\na |\na u| \,,\na u \right\rangle}{{ \left[ \,\frac{3-p}{p-1}\, (1-u) \right]^{\frac{6-p}{3-p}}}} 
\\
&\quad\,\geq \,\,\eta_{{k}}\!\left( \!\frac{ (p-1)\vert \nabla u\vert}{\left[ \frac{3-p}{p-1}(1-u) \right]^{\frac{1}{3-p}}} \!\right) \,\mathrm{div}\,Y_\ep \\
\,- \,&\,\eta'_{{k}}\!\left( \!\frac{ (p-1) \vert \nabla u\vert}{\left[ \,\frac{3-p}{p-1}\, (1-u) \right]^{\frac{1}{3-p}}} \!\right) 
\left( \!\frac{ (p-1) \vert \nabla u\vert}{\left[ \,\frac{3-p}{p-1}\, (1-u)  \right]^{\frac{1}{3-p}}} \!\right)^{\!\!\!2}\!\frac{{c_{p,\ep}^{\frac{p-1}{3-p}}}}{(p-1)^2}\max_{M_T} \frac{\frac{2(p-1)|\na u|^2+p\ep^2}{|\na u|^2 +\ep^2}\, |\na\na u| }{{ \left[ \,\frac{3-p}{p-1}\, (1-u) \right]^{\frac{4-p}{3-p}} }} \,.
\end{align}
Noticing that 
\begin{equation}
\frac{2(p-1)|\na u|^2+p\ep^2}{|\na u|^2 +\ep^2}\, \leq \, {3p-2}
\end{equation}
in $M_T$ and $\eta'(\tau) \, \tau^2 \, \leq \,{9}/{2k} \, \leq \, {5}/{k}$ for every $\tau \in [0, + \infty)$, we finally arrive at
\begin{align}
\mathrm{div}\,Y_{\ep,k}\,\, \geq \,\,\eta_{{k}}\!\left( \!\frac{ (p-1)\vert \nabla u\vert}{\left[ \frac{3-p}{p-1}(1-u) \right]^{\frac{1}{3-p}}} \!\right) \,\mathrm{div}\,Y_\ep 
\,- \, \frac{5}{k} \, \, {c_{p,\ep}^{\frac{p-1}{3-p}} }  \frac{3p-2}{(p-1)^2}\, \max_{M_T} \frac{ |\na\na u| }{{ \left[ \,\frac{3-p}{p-1}\, (1-u) \right]^{\frac{4-p}{3-p}} }} \, .
\end{align}
In particular, the last summand in the right hand side converges to zero uniformly on $M_T$, as $k \to + \infty$. Now, for every $s,t$ as in the statement and every $k \in  \mathbb{N}$, we have that the vector fields $X_{\ep,k}$ coincide with the vector field $X_\ep$ on the boundary of $\{\alpha^\ep_p(s) < u <\alpha^\ep_p(t) \}$, for every $k$ large enough. Hence, the divergence theorem yields
\begin{align}
F^\ep_p(t)& - F^\ep_p(s)  = \!\!\!\int\limits_{\{u=\alpha_{p}(t)\}} \!\!\! \left\langle X_{\ep,k} \,  ,  \frac{\na u}{|\na u|}\right\rangle  d\sigma \,\,- \!\!\!\!\!\!\int\limits_{\{u=\alpha_{p}(s)\}} \!\!\! \left\langle X_{\ep, k} , \frac{\na u}{|\na u|}\right\rangle  d\sigma \,\\
=&\!\!\!\int\limits_{\{\alpha_p(s) < u < \alpha_{p}(t)\}} \!\!\! \!\!\! \mathrm{div}\,X_{\ep, k} \,\, d\mu \, \\
=& \!\!\!\int\limits_{\{\alpha_p(s) < u < \alpha_{p}(t)\}} \!\!\! \left\{\frac{  c_{p,\ep}^{-\frac{(2-p)(p-1)}{(3-p)}}   |\na u |_\ep^{p-2} |\na u|^2 }{\left[ \,\frac{3-p}{p-1}\, (1-u)  \right]^{\frac{p-1}{3-p}+1}} \,
+  \,\mathrm{div}\,Y_{\ep,k} \right\} \,\, d\mu 
\\
\geq& \!\!\!\int\limits_{\{\alpha_p(s) < u < \alpha_{p}(t)\}} \!\!\! \left\{\frac{  c_{p,\ep}^{-\frac{(2-p)(p-1)}{(3-p)}}   |\na u |_\ep^{p-2} |\na u|^2 }{\left[ \,\frac{3-p}{p-1}\, (1-u)  \right]^{\frac{p-1}{3-p}+1}} \,
+ \eta_{{k}}\!\left( \!\frac{ (p-1)\vert \nabla u\vert}{\left[ \,\frac{3-p}{p-1}\, (1-u)  \right]^{\frac{1}{3-p}}} \!\right) \!\mathrm{div}\,Y_{\ep}\right\} \, d\mu 
\\
& \,- \, \frac{5}{k} \, \, |M_T| \, \, {c_{p,\ep}^{\frac{p-1}{3-p}} } \frac{3p-2}{(p-1)^2}\, \, \max_{M_T} \frac{ |\na\na u| }{{ \left[ \,\frac{3-p}{p-1}\, (1-u) \right]^{\frac{4-p}{3-p}} }} \, .
\label{eqcar98}
\end{align}
To proceed, we need to let $k \to + \infty$ in the above inequality. Reasoning as in Subsection~\ref{sub:sard} and using formulas~\eqref{eqff1} and~\eqref{eqff1bis}, we write $\mathrm{div}\,Y_\ep =P_{\ep}+D_{\ep}$, with
\begin{align}
P_\ep&=\frac{c_{p,\ep}^{\frac{p-1}{3-p}} |\na u | }{\left[ \,\frac{3-p}{p-1}\, (1-u) \right]^{\frac{p-1}{3-p} +1}} \biggl[\,\frac{\vert\,\nabla^{\top}\vert \nabla u\vert\vert^{2}}{|\na u|^2}+\vert \ringg{\mathrm{h}}\vert^{2}+\frac{(p-1)(3-p)}{2}\,\,\frac{\big\langle\na |\na u|,\na u\big\rangle^2}{\vert \nabla u\vert_\ep^{4}} \\
&\qquad\qquad\qquad\qquad\qquad\qquad\qquad\qquad\qquad\qquad \, + \, \ep^2\,\frac{\,2 |\na u|^2+\ep^2}{2 |\na u|^4} \,\,\frac{\big\langle\na |\na u|,\na u\big\rangle^2}{\vert \nabla u\vert_\ep^{4}}
\biggr]\\
D_{\ep}&=\frac{c_{p,\ep}^{\frac{p-1}{3-p}} |\na u | }{\left[ \,\frac{3-p}{p-1}\, (1-u) \right]^{\frac{p-1}{3-p} +1}}\biggl[\,\frac{\Ric(\na u,\na u)}{|\na u|^2}\,+\,\frac{(p-1)(5-p)}{(3-p)^{2}}\,\,\frac{\vert \nabla u\vert^{2}}{(1-u)^{2}}\\
&\qquad\qquad\qquad\qquad\quad\qquad\,+\,\frac{(5-p)(p-1)\vert \nabla u\vert^{2}+(p+1)\,\ep^{2}}{(3-p)\vert \nabla u\vert^{2}_{\ep}}\,\,\frac{\bigl\langle\na |\na u|,\na u\bigr\rangle}{(1-u)\,|\na u|}\,\biggr]\,.\label{eqff8}
\end{align}
Noticing that $P_\ep$ is nonnegative and $D_\ep$ is bounded and applying the monotone convergence theorem and the dominated convergence theorem respectively to the sequences $\eta_k P_\ep$ and $\eta_k D_\ep$, one has that the integral of $\eta_k\,\mathrm{div}\,Y_{\ep}$ in formula~\eqref{eqcar98} converges to the integral of $\mathbb{I}_{M_T\setminus\mathrm{Crit}(u)} \,\mathrm{div}\,Y_\ep$, as $k \to + \infty$, where $\mathbb{I}_{M_T\setminus\mathrm{Crit}(u)}$ denotes the characteristic function of $M_T\setminus\mathrm{Crit}(u)$. Hence, passing to the limit and using the coarea formula, one gets
\begin{align*}
F^\ep_p(t) - F^\ep_p(s) & \geq \!\!\!\int\limits_{\{\alpha^\ep_p(s) < u < \alpha^\ep_{p}(t)\}} \!\!\! \left\{\frac{ c_{p,\ep}^{-\frac{(2-p)(p-1)}{(3-p)}} |\na u |_\ep^{p-2} |\na u|^2 }{\left[ \,\frac{3-p}{p-1}\, (1-u) \right]^{\frac{p-1}{3-p}+1}} \,
+ \mathbb{I}_{M_\Tau\setminus\mathrm{Crit}(u)} \,\mathrm{div}\,Y_{\ep} \right\} \,d\mu\\
&= \!\!\!\int\limits_{(\alpha^\ep_p(s) , \alpha^\ep_{p}(t)) \setminus \mathcal{N}_\ep} \!\!\!\!\!\,d\tau \int\limits_{\{ u= \tau \}}\!\!\! 
\left\{\frac{ c_{p,\ep}^{-\frac{(2-p)(p-1)}{(3-p)}} |\na u |_\ep^{p-2} |\na u| }{\left[ \,\frac{3-p}{p-1}\, (1-u) \right]^{\frac{p-1}{3-p}+1}} \,+ \,\frac{\mathrm{div}\,Y_{\ep}}{|\na u|} \right\}d\sigma\\
&= \!\!\!\int\limits_{(\alpha^\ep_p(s) , \alpha^\ep_{p}(t)) \setminus \mathcal{N}_\ep} \!\!\!\!\!\,d\tau \int\limits_{\{ u= \tau \}}\!\!\! \frac{\mathrm{div}\,X_{\ep} }{|\na u|} \,d\sigma\,,
\end{align*}
where $\mathcal{N}_\ep = u(\mathrm{Crit}(u))$ is the set of the critical values of $u$. It is important to notice that in the second passage we benefit from the possibility of using the Sard theorem. Recalling the expression for $\mathrm{div}\,X_{\ep}$ given at the beginning of the proof and using the Gauss--Bonnet theorem, we arrive at 
\begin{align}
F^\ep_p(t) - F^\ep_p(s)& \geq   \!\!\!\int\limits_{(\alpha^\ep_p(s) , \alpha^\ep_{p}(t)) \setminus \mathcal{N}_\ep} \!\!\!\!\!  \frac{c_{p,\ep}^{\frac{p-1}{3-p}}  
\Big( 4\pi - 2\pi \chi(\{ u={\tau} \}) \Big)
}{\left[ \,\frac{3-p}{p-1}\, (1-\tau) \right]^{\frac{p-1}{3-p} +1}} \,d\tau\,\\
&\qquad-\,\ep\bigg(\frac{p+1}{p-1}\bigg)^{\!\!2} \!\!\!\int\limits_{(\alpha^\ep_p(s) , \alpha^\ep_{p}(t)) \setminus \mathcal{N}_\ep} \!\!\!\!\!\!\!\!\!d\tau\,\,  \frac{c_{p,\ep}^{\frac{p-1}{3-p}}}{\left[ \,\frac{3-p}{p-1}\, (1-\tau) \right]^{\frac{p-1}{3-p} +3}} \int\limits_{\{ u= \tau \}}\!\!\!\!\frac{\ep |\na u|^2}{2(p+1)|\na u|^2 + 3 \ep^2} \,d\sigma\,.\label{feq1}
\end{align}
The conclusion then follows by observing that every regular level set of $u$ is connected. The argument follows the very same lines presented at the end of the previous subsection, so we let the details to the reader.
\end{proof}

\subsection{Proof of Theorem~\ref{thm:monotonicity}.} 
\label{sub:proof}
We want to show that, under the assumptions of the statement, the following implication holds true
\begin{equation}
s \leq t \quad \Rightarrow \quad F_p(s) \leq F_p (t) \,,
\end{equation}
for every $s,t \in [t_p, + \infty)$ such that $\alpha_p(s)$ and $\alpha_p(t)$ are regular values of $u_p$. First we notice that if $s=t_p$, then $\{u_p=\alpha_p(s)\} = \{ u_p = 0 \}= \pa M$, which is a regular level set of $u_p$. As $\pa M$ is compact, we have that the tubular neighborhood $\{\alpha_p(s) \leq u_p \leq \alpha_p(t) \}$ is foliated by regular level sets of $u_p$, provided $t-t_p$ is small enough. In this case, the conclusion follows by the direct argument presented in Subsection~\ref{sub:nocrit}.

To prove the remaining cases, we assume then by contradiction that there exist real numbers $t_p<s < t <+\infty$, such that $\alpha_p(s)$ and $\alpha_p(t)$ are regular values for the $p$--capacitary potential $u_p$, but $F_p(t) < F_p(s) $. In particular, we can choose a sufficiently small real number $\delta>0$ such that 
\begin{equation}\label{eq:contr}
0 \,>\,-2\delta \,\geq \,F_p(t) - F_p(s) \,.
\end{equation}
Choosing $T> \alpha_p(t)$, regular value of $u_p$, we have that both the level sets
 $\{u=\alpha_p(s)\}$ and $\{u=\alpha_p(t)\}$ are contained in $M_T= \{0<u_p< T\}$. Now, we recall that on any compact sets sitting inside $M_T \setminus {\rm Crit}(u)$, the solutions $u_p^\ep$ to the approximate problems~\eqref{f2} converge to $u_p$ in the $\mathscr{C}^k$--topology, for every $k \in \mathbb{N}$, as $\ep\to 0$. By means of this property, we are going to prove the following approximation lemma for the function $F_p$, holding at regular values of the corresponding $p$--capacitary potential $u_p$.
\begin{lemma}
\label{lem:Fapp}
Let  $t_p< t< T< + \infty$ be such that $\alpha_p(t)$ is a regular value for the solution $u_p$ of  the problem~\eqref{f1} and let $F_p$ and $F_p^\ep$ be the functions defined in formulas~\eqref{defFp} and~\eqref{defFpep}, respectively. Then, we have
\begin{equation*}
\lim_{\ep \to 0} F_p^\ep(t) \,\,= \,\,F_p(t) \,.
\end{equation*}
\end{lemma}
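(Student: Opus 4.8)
The plan is to prove $F_p^\ep(t) \to F_p(t)$ by expressing both quantities as integrals over a fixed regular level set and passing to the limit using the $\mathscr{C}^k$-convergence of $u_p^\ep$ to $u_p$ away from the critical set. The starting point is the hypothesis that $\alpha_p(t)$ is a regular value of $u_p$, so that $\Sigma_t := \{u_p = \alpha_p(t)\}$ is a smooth compact surface on which $\nabla u_p \neq 0$; by continuity of $\nabla u_p$ there is an open neighborhood $U$ of $\Sigma_t$ in $M_T$ with $\overline{U} \cap \mathrm{Crit}(u_p) = \emptyset$ and $\overline{U}$ compact. On such a set the convergence $u_p^\ep \to u_p$ holds in $\mathscr{C}^k(\overline{U})$ for every $k$, hence $|\nabla u_p^\ep|$, $\mathrm{H}^\ep$ and all the geometric quantities appearing in the explicit formula~\eqref{defFpep2} converge uniformly on $\overline{U}$ to their counterparts for $u_p$, and moreover $c_{p,\ep} \to c_p$ (because $\partial M$ is a regular level set of both $u_p$ and $u_p^\ep$, and $|\nabla u_p^\ep|_\ep^{p-2}|\nabla u_p^\ep| \to |\nabla u_p|^{p-1}$ uniformly on $\partial M$, so the defining integral~\eqref{eqcpep} converges). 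Consequently $\alpha_p^\ep(t) \to \alpha_p(t)$ as well, and for $\ep$ small the level value $\alpha_p^\ep(t)$ lies in the range swept by the foliation of $U$.

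The second step is to argue that $\{u_p^\ep = \alpha_p^\ep(t)\}$ is, for small $\ep$, a smooth surface lying inside $U$ and converging to $\Sigma_t$ in a strong sense. Since $\nabla u_p \neq 0$ on $\overline{U}$, the implicit function theorem gives a local product structure, i.e.\ $U$ is foliated by the level sets of $u_p$ over an interval of values around $\alpha_p(t)$; by the uniform $\mathscr{C}^1$-closeness of $u_p^\ep$ to $u_p$, the same holds for $u_p^\ep$ with $\nabla u_p^\ep \neq 0$ on $\overline{U}$ once $\ep$ is small, so $\alpha_p^\ep(t)$ is a regular value of $u_p^\ep$ and $\{u_p^\ep = \alpha_p^\ep(t)\} \cap M_T$ is a single smooth surface $\Sigma_t^\ep \subset U$. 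One can then identify $\Sigma_t^\ep$ with $\Sigma_t$ via the flow of $\nabla u_p / |\nabla u_p|^2$ (or via normal graphs), obtaining diffeomorphisms $\Psi_\ep : \Sigma_t \to \Sigma_t^\ep$ that converge to the identity in $\mathscr{C}^k$. Pulling back the integrands of~\eqref{defFpep2} by $\Psi_\ep$ turns $F_p^\ep(t)$ into an integral over the fixed manifold $\Sigma_t$ of quantities (the induced area form, $|\nabla u_p^\ep|$, the mean curvature of $\Sigma_t^\ep$, the constant $c_{p,\ep}$, the value $\alpha_p^\ep(t)$) that all converge uniformly to those defining $F_p(t)$ through formula~\eqref{eqfp}. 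The limit then follows by uniform convergence of integrands on a fixed compact domain.

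The third, essentially bookkeeping, step is to verify that the pulled-back quantities really do converge: the area densities converge because $\Psi_\ep \to \mathrm{id}$ in $\mathscr{C}^1$ and the metric is fixed; $|\nabla u_p^\ep| \circ \Psi_\ep \to |\nabla u_p|$ uniformly because $\nabla u_p^\ep \to \nabla u_p$ uniformly on $\overline{U}$ and $\Psi_\ep \to \mathrm{id}$; the mean curvature $\mathrm{H}^\ep$ of $\Sigma_t^\ep$ converges to $\mathrm{H}$ of $\Sigma_t$ because it is a smooth function of the first and second derivatives of $u_p^\ep$, which converge in $\mathscr{C}^0$ on $\overline{U}$, again composed with $\Psi_\ep \to \mathrm{id}$; and $c_{p,\ep} \to c_p$, $t_p^\ep \to t_p$, $\alpha_p^\ep(t) \to \alpha_p(t)$ as noted. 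Plugging these into the explicit expression~\eqref{eqfp}/\eqref{defFpep2} and using that $t$ is fixed gives $F_p^\ep(t) \to F_p(t)$.

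The main obstacle I anticipate is purely a matter of justifying that one may confine the whole analysis to the critical-point-free neighborhood $U$: one must rule out that the level set $\{u_p^\ep = \alpha_p^\ep(t)\}$ develops spurious components outside $U$, or that $\nabla u_p^\ep$ has zeros accumulating near $\Sigma_t$. This is handled by the uniform $\mathscr{C}^1$-convergence on $\overline{U}$ (which forces $|\nabla u_p^\ep| \geq \tfrac12 \min_{\overline U}|\nabla u_p| > 0$ for small $\ep$) together with the fact that $\Sigma_t^\ep$, being a regular level set separating $\partial M$ from $\{u_p = T\}$ by connectedness (established in Subsection~\ref{sub:sard} for $u_p$ and analogously for $u_p^\ep$ via Sard), must meet $U$; since $u_p^\ep$ is monotone transverse to the foliation of $U$, the component of the level set inside $U$ is unique and no other component can exist in $M_T$ at that regular value. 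Everything else is routine uniform-convergence estimates on the fixed compact surface $\Sigma_t$.
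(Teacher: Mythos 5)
Your proposal is correct in substance but passes to the limit by a different mechanism than the paper. You localize to a critical--point--free neighborhood $U$ of $\Sigma_t$, use the smooth convergence of $u_p^\ep$ there, realize $\Sigma_t^\ep=\{u_p^\ep=\alpha_p^\ep(t)\}$ as a normal graph over $\Sigma_t$ via diffeomorphisms $\Psi_\ep\to\mathrm{id}$, and compare the surface integrals in~\eqref{eqfp} and~\eqref{defFpep2} directly; this needs $\mathscr{C}^2$--control of the graphs (for the mean curvature) but is otherwise self--contained. The paper never parametrizes the approximate level set: it fixes a nearby regular level $\{u_p=\alpha_p(t-\eta)\}$, converts the integral of $\langle X_\ep,\nabla u_p^\ep/|\nabla u_p^\ep|\rangle$ over $\{u_p^\ep=\alpha_p^\ep(t)\}$ into an integral over that fixed surface by the divergence theorem on the thin region $\mathscr{U}_\eta\cap\{u_p^\ep<\alpha_p^\ep(t)\}$, with error bounded by $\max_{\overline{\mathscr{U}_\eta}}|\mathrm{div}\,X_\ep|\cdot|\mathscr{U}_\eta|$, and absorbs the shift from $t$ to $t-\eta$ using the continuity of $F_p$ at regular values. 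That route only needs $X_\ep\to X$ (hence $\mathrm{div}\,X_\ep\to\mathrm{div}\,X$) uniformly on a fixed compact collar, so it avoids any discussion of convergence of the second fundamental forms of the approximating surfaces; your route avoids the divergence theorem and the continuity-of-$F_p$ step at the price of more bookkeeping. Both arguments need $c_{p,\ep}\to c_p$ (hence $\alpha_p^\ep(t)\to\alpha_p(t)$), which you make more explicit than the paper does.

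There is, however, a circularity in one step as you wrote it: you infer that $\alpha_p^\ep(t)$ is a regular value of $u_p^\ep$ and that $\Sigma_t^\ep\subset U$ from the nonvanishing of $\nabla u_p^\ep$ on $\overline U$ alone, and you later exclude components outside $U$ by invoking the connectedness of regular level sets of $u_p^\ep$ --- but that connectedness statement (which comes from $H_2(M,\partial M;\Z)=\{0\}$ and the maximum principle, not from Sard) is proved precisely for regular values, i.e. it presupposes what you are trying to establish; a priori $\{u_p^\ep=\alpha_p^\ep(t)\}$ could meet $\mathrm{Crit}(u_p^\ep)$ far from $\Sigma_t$. The gap closes in one line, exactly as in the paper: $M_T$ is compact, $u_p^\ep\to u_p$ uniformly on all of $M_T$, and $|u_p-\alpha_p(t)|$ is bounded below outside $U$ (or outside the collar $\mathscr{U}_\eta$), while $\alpha_p^\ep(t)\to\alpha_p(t)$; hence for $\ep$ small the entire level set $\{u_p^\ep=\alpha_p^\ep(t)\}$ is forced into $U$, and only then do your regularity, graph and uniform--convergence statements apply. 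With that repair your argument is complete.
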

\begin{proof}
Let $\eta >0$ be such that the tubular neighborhood $\mathscr{U}_\eta = \{ \alpha_p(t-\eta) < u_p< \alpha_p(t+ \eta)\}$ is entirely foliated by regular level sets of $u_p$. Choosing a small enough $\eta>0$, we can assume that also the level sets $\{u_p = \alpha_p(t-\eta)\}$ and $\{u_p = \alpha_p(t+\eta)\}$ are regular. Since $F_p$ is continuous at the regular values of $u_p$, one has that for every $\delta>0$ there exists $\overline\eta=\overline\eta(\delta) >0$ such that $|F_p (t) - F_p(t-\eta)| < \delta/2$, for every $0<\eta<\overline\eta$. 
As the functions $u_p^\ep$ converge uniformly (actually, in the $\mathscr{C}^k$--topology) to the function $u_p$ on $\overline{\mathscr{U}_\eta}$, it is easy to check that there exists $\overline{\ep} = \overline{\ep}(\overline\eta)>0$ such that $\{u_p^\ep=\alpha_p^\ep(t) \} \subseteq \mathscr{U}_\eta$, provided $0< \ep<\overline{\ep}$. In particular, up to choosing $\ep$ small enough, we have that $\{u_p^\ep=\alpha_p^\ep(t) \} \cap \{u_p=\alpha_p(t-\eta) \} = \emptyset$. Moreover, possibly considering a smaller $\overline\ep>0$, it turns out that for every $0<\ep<\overline\ep$, the set $\{u_p^\ep=\alpha_p^\ep(t) \}$ is a regular level set of $u_p^\ep$, as $\na u_p\neq 0$ on $\overline{\mathscr{U}_\eta}$ and $|\na u_p^\ep|$ converge uniformly to $|\na u_p|$ on such a tubular neighborhood. We may then proceed with the following estimate,
\begin{align*}
\left|F_p(t) - F_p^\ep(t)\right| & \,\,\leq \,\,\left|F_p(t) - F_p(t-\eta)\right| \,+ \,\left|F_p(t-\eta) - F_p^\ep(t) \right|\\
& \,\,\leq \,\,\delta/2 \,\,\,+ \Bigg| \,\,\int\limits_{\{u_p=\alpha_{p}(t-\eta)\}} \!\!\! \left\langle X \,, \frac{\na u_p}{|\na u_p|}\right\rangle d\sigma \,\,- \!\!\!\!\!\!\int\limits_{\{u_p^\ep=\alpha_{p}^\ep(t)\}} \!\!\! \left\langle X_{\ep} , \frac{\na u_p^\ep}{|\na u_p^\ep|}\right\rangle d\sigma 	\,\,\Bigg| \,\\
& \,\,\leq \,\,\delta/2 \,\,\,+ \Bigg| \,\,\int\limits_{\{u_p=\alpha_{p}(t-\eta)\}} \!\!\! \left\langle X- X_\ep \,, \frac{\na u_p}{|\na u_p|}\right\rangle d\sigma	\,\,\Bigg|\\
& \,\,\,\quad + \,\Bigg| \,\,\int\limits_{\{u_p=\alpha_{p}(t-\eta)\}} \!\!\! \left\langle X_\ep \,, \frac{\na u_p}{|\na u_p|}\right\rangle d\sigma \,\,- \!\!\!\!\!\!\int\limits_{\{u_p^\ep=\alpha_{p}^\ep(t)\}} \!\!\! \left\langle X_{\ep} , \frac{\na u_p^\ep}{|\na u_p^\ep|}\right\rangle d\sigma 	\,\,\Bigg|\\
& \,\,\leq \,\,\delta/2 \,\,\,+ \Big( \max_{\overline{\mathscr{U}_\eta}}\,\left| X - X_\ep \right| \Big) \,\big| {\{u_p=\alpha_{p}(t-\eta)\}} \big| \,+ \,\Bigg| \,\,\int\limits_{\mathscr{U}_\eta \cap \{ u_p^\ep< \alpha_p^\ep(t)\}} \!\!\!\!\!\!\mathrm{div}\,X_\ep \,d\mu \,\,\Bigg|\\
& \,\,\leq \,\,\delta/2 \,\,\,+ \Big( \max_{\overline{\mathscr{U}_\eta}}\,\left| X - X_\ep \right| \Big) \,\big| {\{u_p=\alpha_{p}(t-\eta)\}} \big|\,+ \,\Big( \max_{\overline{\mathscr{U}_\eta}}\,\left| {\mathrm {div}\,} X_\ep \right| \Big) \,\left| \mathscr{U}_\eta\right| \,,
\end{align*}
where the vector fields $X$ and $X_\ep$ are defined by formulas~\eqref{X_p} and~\eqref{X_pep}, respectively referring to $u_p$ and $u_p^\ep$.
Again, by the smooth convergence of the functions $u_p^\ep$ to $u_p$ on $\overline{\mathscr{U}_\eta}$, we get that $X_\ep$ converge smoothly to $X$ on the same set. In particular, up to choosing $\overline{\eta} = \overline{\eta}(\delta)$ and $\overline\ep= \overline\ep(\overline{\eta}(\delta))$ small enough, the last two summands can be made smaller than $\delta/2$ and we are done.
\end{proof}
We can now conclude the proof of Theorem~\ref{thm:monotonicity}. By the choice of $T$ and Lemma~\ref{lem:Fapp}, there exists $\ep_\delta>0$ such that 
\begin{equation*}
\left| F_p(s) - F_p^\ep(s) \right| \,\leq \,\delta/2 \qquad \hbox{and} \qquad \left| F_p(t) - F_p^\ep(t) \right| \,\leq \,\delta/2 \,,
\end{equation*}
for every $0<\ep< \ep_\delta$. Combining this information together with the assumption~\eqref{eq:contr}, it follows that
\begin{align*} 
- 2 \delta \,\,& \geq \,\,F_p(t) - F_p(s) \,= \,F_p(t) - F_p^\ep(t) +F_p^\ep(t) - F_p^\ep(s) + F_p^\ep(s) - F_p(s)\\
& \geq \,\,F_p^\ep(t) - F_p^\ep(s) - \left| F_p(t) - F_p^\ep(t) \right| - \left| F_p(s) - F_p^\ep(s) \right| \,\,= \,\,F_p^\ep(t) - F_p^\ep(s) -\delta \,.
\end{align*}
Using Lemma~\ref{thm:approx}, one then has that for every $0<\ep< \ep_\delta$, there holds
\begin{align*}
-\delta \,\,& \geq \,\,- \,\,\ep \,\,\Big(\frac{p+1}{p-1}\Big)^{\!2} \!\!\!\! \!\!\!\!\!\!\!\! \int\limits_{\{\alpha_p^\ep(s) < u_p^\ep< \alpha_{p}^\ep(t) \}} \!\!\!\!\!\!\!\!\!\, \frac{\ep |\na u_p^\ep|}{2(p+1)|\na u_p^\ep|^2 + 3 \ep^2} \,\frac{ c_{p,\ep}^{\frac{p-1}{3-p}} \,|\na u_p^\ep|^2}{\left[ \,\frac{3-p}{p-1}\, (1-u_p^\ep) \right]^{\frac{p-1}{3-p} + 3}} 
 \,d\mu\\
& \geq \,\,- \,\,\frac{\ep}{6} \,\,\Big(\frac{p+1}{p-1}\Big)^{\!2} \!\!\!\! \!\!\!\!\!\!\!\! \int\limits_{\{\alpha_p^\ep(s) < u_p^\ep< \alpha_{p}^\ep(t) \}} \!\!\! \frac{ c_{p,\ep}^{\frac{p-1}{3-p}} \,|\na u_p^\ep|^2
}{\left[ \,\frac{3-p}{p-1}\, (1-u_p^\ep) \right]^{\frac{p-1}{3-p} + 3}} 
 \,d\mu 
\end{align*}
where we used the elementary inequality
\begin{equation*}
\frac{\ep |\na u_p^\ep|}{2(p+1)|\na u_p^\ep|^2 + 3 \ep^2} \,\,\leq \,\,\frac{1}{6}\,.
\end{equation*}
Letting $\ep\to0$, we have that the integral in the rightmost hand side of the above inequality converges to 
$$
\int\limits_{\{\alpha_p(s) < u_p< \alpha_{p}(t) \}} \!\!\! \frac{ c_{p}^{\frac{p-1}{3-p}} \,|\na u_p|^2}{\left[ \,\frac{3-p}{p-1}\, (1-u_p) \right]^{\frac{p-1}{3-p} + 3}} 
 \,d\mu\,,
$$
as $u_p^\ep \to u_p$ in the $\mathscr{C}^{1,\beta}$--topology on the compact subsets of $M_T=\{0<u_p< T\}$. This implies that the right hand side is in turn converging to zero, when $\ep \to 0$, leading to the desired contradiction.

\section{Proof of the Riemannian Penrose inequality for a single black hole}\label{sec:RPI}

In light of the monotonicity result obtained in Theorem~\ref{thm:monotonicity}, we present in this section a new proof of the Riemannian Penrose inequality, first proved by Huisken--Ilmanen~\cite{HI} in the case of a single black hole, and by Bray~\cite{bray1} in the general case of multiple black holes. In order to present the precise statement in Theorem~\ref{RPI} below, let us first set up and recall some basic notations and definitions.

\begin{definition}
\label{def:AF} 
A complete $3$--dimensional Riemannian manifold $(M,g)$, with or without boundary, with one single end, is said to be {\em $\mathscr{C}^{1,\alpha}$--asymptotically flat with decay rate $\tau$}, or simply {\em $\mathscr{C}^{1,\alpha}_\tau$--asymptotically flat}, with $\alpha \in (0,1)$ and $\tau>0$, if the following conditions are satisfied:
\begin{enumerate}[label=${\mathrm{(\roman*)}}$]
\item There exists a compact set $K\subseteq M$ such that the end $E=M\setminus K$ is diffeomorphic to the complement of a closed ball in $\R^3$ centered at the origin, through a so--called {\em asymptotically flat coordinate chart} $(E,(x^{1},x^{2} ,x^{3}))$.
\item In such a chart, the metric tensor can be expressed as
\begin{equation*}
g \,= \,g_{ij}\,dx^{i}\otimes dx^{j} \,\,= \,(\delta_{ij} + \gamma_{ij} )\,dx^{i}\otimes dx^{j}
\end{equation*}
with 
\begin{equation}\label{eq4bis}
\sum_{i,\,j}\vert \xx x\xx\vert^{\tau}\,\vert\xx\gamma_{ij}\xx\vert+\sum_{i,\,j, \, k}\vert \xx x\xx\vert^{1+\tau}\,\vert\xx\partial_{k}\gamma_{ij}\xx\vert+\sum\limits_{i,\,j,\,k}\vert \xx x\xx\vert^{1+\tau+\alpha}\,[\partial_{k}\gamma_{ij}]_{\alpha} \,= \,O(1)\,, 
\end{equation}
as $|x|\to + \infty$, where
\begin{equation}\label{holdercondition}
[\partial_{k}\gamma_{ij}]_{\alpha}\,(p)=\!\!\!\!\sup_{q\in E\setminus \{p\}}\frac{\vert\xx \partial_{k}\gamma_{ij}(q)-\partial_{k}\gamma_{ij}(p)\xx \vert}{\vert \xx x(q)-x(p)\xx\vert^{\alpha}}\,,
\end{equation}
for every $p \in E$.
\end{enumerate}
\end{definition}
According to the physicists Arnowitt, Deser and Misner, who first introduced it in~\cite{AdM0}, the ADM mass of an asymptotically flat Riemannian $3$--manifold is defined as the limit
\begin{equation}\label{formADMmass}
m_{\mathrm{ADM}} \,= \,\lim_{r\to +\infty}\,\frac{1}{16\pi} \!\!\! \int\limits_{\{\vert \xx x \xx\vert=r\}}\!\!\!\bigl(\partial_{j}g_{ij}-\partial_{i}g_{jj}\bigr) \,\frac{x^{i}}{\vert \xx x \xx\vert}\,\,d{\sigma}_{\mathrm{eucl}}\,,
\end{equation}
in an asymptotically flat coordinate chart.\\
It can be shown that if the scalar curvature of $(M,g)$ is nonnegative and the decay rate $\tau$ is strictly larger than $1/2$, the ADM mass is a well defined geometric invariant, i.e., the above limit exists (possibly equal to $+\infty$) and its value does not depend on the particular asymptotically flat coordinate chart that is used to compute it (see~\cite{Bartnik} and~\cite{Chrusciel}).

\smallskip

We are now in the position to state and prove the main result of this section.

\begin{theorem}[Riemannian Penrose inequality for a single black hole]
\label{RPI}
Let $(M,g)$ be a $3$--dimensional, complete, connected, noncompact Riemannian manifold with a smooth, compact, connected boundary and one single end. Assume that:
\begin{enumerate}[label=${\mathrm{(\roman*)}}$]
\item The metric $g$ has nonnegative scalar curvature $\RRR\geq 0$.
\item $(M,g)$ is $\mathscr{C}^{1,\alpha}_1$--asymptotically flat, moreover the following Ricci curvature lower bound holds,
\begin{equation}\label{Riccilowerboundmetrica}
\Ric \,\geq \,- \frac{C}{\vert \xx x \xx\vert^{2}}\xx\,g\,, \qquad \hbox{for some $C>0$} \,.
\end{equation}
\item $\pa M$ is the unique closed minimal surface in $(M,g)$. 
\end{enumerate}
Then, the ADM mass satisfies
\begin{equation}
\label{eq:rpi}
m_{\mathrm{ADM}}\,\geq \,\sqrt{	\frac{|\pa M|}{16 \pi}}\,.
\end{equation}
\end{theorem}

\begin{remark}
In~\cite{Chrusciel} Chru\'sciel was able to show that the ADM mass is well defined, under a slightly weaker decay assumption than the one proposed in Definition~\ref{def:AF}. More precisely, it is sufficient to require that 
\begin{equation}
\label{eq:c1tau}
\sum_{i,\,j}\vert \xx x\xx\vert^{\tau}\,\vert\xx\gamma_{ij}\xx\vert+\sum_{i,\,j}\vert \xx x\xx\vert^{1+\tau}\,\vert\xx\partial_{k}\gamma_{ij}\xx\vert \,= \,O(1)\,, \quad \hbox{as $|x| \to + \infty$} \,,
\end{equation}
with $\tau>1/2$ and without any further condition on the decay of the H\"older quotients of the functions $\partial_{k}\gamma_{ij}$. In accordance with the terminology employed in Definition~\ref{def:AF}, it is then natural to refer to condition~\eqref{eq:c1tau} as to {\em $\mathscr{C}^1_\tau$--asymptotical flatness}. On this regard, it is worth pointing out that in~\cite{HI} the Riemannian Penrose inequality was established for the larger class of $\mathscr{C}^1_1$--asymptotically flat manifolds satisfying the Ricci lower bound~\eqref{Riccilowerboundmetrica}. Let us also observe that in~\cite{bray1} the decay conditions
\begin{align}
\sum_{i,\,j,\,k,\,\ell}\vert \xx x\xx\vert^{2+\tau}\,\vert\xx\partial_\ell\partial_k\gamma_{ij}\xx\vert\,&= \,O(1) \,, \quad \hbox{as $|x| \to + \infty$} \\
\vert \xx x\xx\vert^{\zeta}\vert\xx\Ro\xx\vert&=\,O(1) \,, \quad \hbox{as $|x| \to + \infty$} 
\end{align}
for some $\tau>1/2$ and $\zeta>3$, were added to assumption~\eqref{eq:c1tau}. In other words, in order to achieve the general result, Bray asked for a $\mathscr{C}^2_\tau$--asymptotical flatness, together with an extra decay assumption on the scalar curvature. 
\end{remark}

\begin{proof}[Proof of Theorem~\ref{RPI}]
Let us consider the solution $u$ of problem~\eqref{f1}, whose existence and uniqueness are guaranteed by~\cite[Theorem~4.1]{FM} (taking also into account~\cite[Remark 4.2]{FM} and~\cite[Theorem~3.2]{PST14}). 
By virtue of assumption (iii) and the classical facts collected in~\cite[Lemma~4.1]{HI}, we have that $M$ is diffeomorphic to $\R^3 \setminus {B}^3$ and $\partial M$ to $\mathbb{S}^2$. In particular, we have that $H_2(M, \pa M;\mathbb{Z})$ is trivial and we can invoke Theorem~\ref{thm:monotonicity} to deduce that, for every $1<p<3$, the function $F_p$ defined in formula~\eqref{defFp}, is monotone nondecreasing along the regular values of $u$. This implies that
\begin{equation}
\label{eq:fpineq}
F_p(t_p)\,\,\leq\,\,\lim_{\ell\to+\infty}F_p(T_\ell)\,,
\end{equation}
for any sequence $T_\ell\to+\infty$ satisfying the condition that $\alpha_p(T_{\ell})$ is a regular value of $u$, for every $\ell \in \N$.
Using formula~\eqref{eqfp} and recalling that $\{ u = \alpha_p(t_p)\} = \{ u=0 \} = \pa M$ is a minimal surface, one immediately gets
\begin{equation}
{F}_{p}(t_p)\,= \,\,4\pi t_p \,-\,\frac{t_p^{\frac{2}{p-1}}}{c_{p}} \int\limits_{\pa M} \vert \nabla u\vert \,\mathrm{H}\,d\sigma \,+ \,\frac{t_p^{\frac{5-p}{p-1}}}{c_{p}^{2}} \int\limits_{\pa M} \vert \nabla u\vert^{2} \,d\sigma\,\,\geq \,\,4\pi t_p \,.
\end{equation}
A simple algebraic computation based on formulas~\eqref{eqtp} and~\eqref{eqcp}, gives
\begin{equation}\label{eq:limbordo}
(4\pi)^{\frac{2-p}{3-p}} \,\Big(\frac{p-1}{3-p}\Big)^{\!\frac{p-1}{3-p}} \mathrm{Cap}_{p}(\partial M)^{\frac{1}{3-p}}\,= \, 4\pi t_p \,\leq \,F_p(t_p) \,. 
 \end{equation}
We now claim that 
\begin{equation}
\label{eq:limend}
\lim_{\ell\to+\infty} F_{p}(T_\ell) \,\leq \,8\pi m_{\mathrm{ADM}} \,,
\end{equation}
and we defer the proof of such an estimate after the forthcoming Lemmas~\ref{lemM}. Here, we observe that the claim, in combination with inequalities~\eqref{eq:fpineq} and~\eqref{eq:limbordo} leads to
\begin{equation}\label{carlo1}
 \Big(\,\frac{p-1}{3-p}\,\Big)^{\!\frac{p-1}{3-p}}\biggl(\frac{\mathrm{Cap}_{p}(\partial M)}{4\pi}\biggr)^{\!\!\frac{1}{3-p}}
\!\! \leq \,\,2 \,m_{\mathrm{ADM}} \,.
\end{equation}
Using~\cite[Theorem~1.2]{FM} (see also~\cite[Theorem~5.6]{Ago_Fog_Maz_2}) and letting $p \to 1^+$, there holds 
\begin{equation}
\lim_{p \to 1^+} \mathrm{Cap}_{p}(\partial M) \,= \,|\pa M| \,.
\end{equation}
Indeed, assumption (iii) easily implies (see ~\cite[Lemma~4.1]{HI}) that $\pa M$ is also {\em outward minimizing}, meaning that for any smooth domain $E$ containing $\pa M$ it holds $|\pa M| \leq |\pa E|$.
Using this piece of information, the Riemannian Penrose inequality simply follows by letting $p\to 1^+$ in inequality~\eqref{carlo1} {(we underline that this passage to the limit is the reason why we cannot treat the equality case as in~\cite{bray1} and~\cite{HI}).}
However, for the sake of completeness, we provide with the following lemma an explicit lower bound for the $p$--capacity of $\pa M$ in terms of its area $|\pa M|$, that is clearly sufficient for our purposes, once it is combined with estimate~\eqref{xu4}.

\begin{lemma}\label{lempa}
Under the assumptions of Theorem~\ref{RPI}, we have that
\begin{equation}\label{xu4}
\sqrt{{|\pa M|\,}} \,\frac{ \big(\frac{3-p}{2p}\big)^{\!\frac{p}{3-p}} } {C_{\mathrm{Sob}}^{3(p-1)/2(3-p)}} \,\leq \,{\mathrm{Cap}}_p(\partial M)^{\!\frac{1}{3-p}} \,,
\end{equation}
for some positive constant $C_{\mathrm{Sob}}>0$.
\end{lemma}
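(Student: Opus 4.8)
The plan is to combine a Sobolev-type inequality on $M$ with the variational characterization of $\mathrm{Cap}_p(\partial M)$ in~\eqref{pcapacity}. The starting point is the observation that the boundary $\partial M$ is \emph{outward minimizing} — a consequence of assumption (iii) via~\cite[Lemma~4.1]{HI} — so that $|\partial M|$ controls the area of any hypersurface enclosing it. First I would recall that, since $(M,g)$ is asymptotically flat with $\tau>1/2$ and has nonnegative scalar curvature, it supports a Euclidean-type Sobolev inequality: there exists $C_{\mathrm{Sob}}>0$ such that $\|w\|_{L^{3/2}(M)}\leq C_{\mathrm{Sob}}\,\|\nabla w\|_{L^{1}(M)}$ for all $w\in\mathscr{C}^\infty_c(M)$ (equivalently, an isoperimetric inequality), the validity of which in this setting follows from the asymptotic flatness together with the Ricci lower bound. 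I would state this as the analytic input and cite the relevant reference rather than reprove it.

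Next, for $v\in\mathscr{C}^\infty_c(M)$ with $v=1$ on $\partial M$, I would chain two inequalities: Hölder's inequality to pass from the $L^p$ norm of $\nabla v$ (which appears in~\eqref{pcapacity}) to the $L^1$ norm on the support of $\nabla v$, picking up a factor of a power of the measure of that support; and then the Sobolev inequality applied to a suitable power of $v$, say $v^{\kappa}$ for an exponent $\kappa=\kappa(p)$ chosen so that the resulting integrand matches $|\nabla v|^p$ after Hölder. Concretely, applying the Sobolev inequality to $w=v^{3(p-1)/(3-p)}$ produces $\|\nabla w\|_{L^1}=\tfrac{3(p-1)}{3-p}\int |\nabla v|\,v^{(\,\cdot\,)}$, and the left-hand side $\|w\|_{L^{3/2}}$ is bounded below by the contribution near $\partial M$, where $v\equiv 1$, hence by $|\partial M|^{2/3}$ (after a limiting/approximation argument replacing $\mathscr{C}^\infty_c$ competitors by ones that are genuinely $1$ on a neighborhood of $\partial M$, or by a standard truncation). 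Rearranging and optimizing the Hölder exponent gives the constant $\big(\tfrac{3-p}{2p}\big)^{p/(3-p)}$ and the power $C_{\mathrm{Sob}}^{-3(p-1)/2(3-p)}$ in~\eqref{xu4}.

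The main obstacle I anticipate is \emph{bookkeeping of exponents} together with justifying the boundary term $|\partial M|$ on the right side of the Sobolev inequality. The exponent $3(p-1)/(3-p)$ is forced by the requirement that after Hölder with conjugate exponents $p$ and $p/(p-1)$ the gradient term reassemble into $\int|\nabla v|^p$; checking this, and checking that the remaining factor is exactly a power of the ambient measure which then cancels or is absorbed, is the delicate computational core. The boundary-term issue is handled by noting that a minimizing sequence for $\mathrm{Cap}_p(\partial M)$ can be taken identically equal to $1$ on a fixed neighborhood of $\partial M$ (mollify the indicator-type competitor), so that $\int_M w^{3/2}\,d\mu \geq \int_{\{v=1\}} 1\, d\mu$ is bounded below using that $\{v=1\}$ contains a region whose boundary encloses $\partial M$; here the outward-minimizing property enters to convert this volume lower bound into the area lower bound $|\partial M|^{2/3}$ via the coarea formula and the isoperimetric inequality near $\partial M$. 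Once these two points are pinned down, inequality~\eqref{xu4} drops out by elementary algebra, and combined with~\eqref{carlo1} it yields~\eqref{eq:rpi} after letting $p\to 1^+$.
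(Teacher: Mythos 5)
Your ingredients are the right ones (the $L^1$--Sobolev inequality on $M$, a H\"older step applied to a power $v^{q}$ of the competitor, and the outward--minimizing property of $\pa M$), but the way you assemble them contains a genuine gap: you propose to bound the \emph{left--hand side} of the Sobolev inequality, $\bigl(\int_M w^{3/2}\,d\mu\bigr)^{2/3}$, from below by $|\pa M|^{2/3}$. This cannot work. The quantity $\int_M w^{3/2}\,d\mu$ is a volume integral, bounded above by $|\mathrm{supp}\,w|$ since $w\le 1$, and a competitor may be identically $1$ only on an arbitrarily thin collar of $\pa M$ and decay quickly, making this volume arbitrarily small while $|\pa M|$ stays fixed. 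The isoperimetric inequality you invoke to ``convert the volume lower bound into an area lower bound'' goes in the wrong direction for this purpose: it bounds area from below by a power of volume, so a small volume yields no information about $|\pa M|$. No truncation or mollification of the minimizing sequence repairs this.

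The correct mechanism routes the area lower bound through the \emph{gradient} term, not the $L^{3/2}$ term. Introduce the $1$--capacity $\mathrm{Cap}_1(\pa M)=\inf\int_M|\na w|\,d\mu$ over competitors equal to $1$ on $\pa M$; the coarea formula gives $\int_M|\na w|\,d\mu\ge\int_0^1|\{w=t\}|\,dt$, and each level set encloses $\pa M$, so the outward--minimizing property yields $\mathrm{Cap}_1(\pa M)\ge|\pa M|$. Then, for a competitor $v$ valued in $[0,1]$, test $\mathrm{Cap}_1$ with $w=v^{q}$, apply H\"older with exponents $p$ and $p/(p-1)$, and choose $q$ so that the remainder is exactly $\bigl(\int_M v^{p^*}\bigr)^{(p-1)/p}$ with $p^*=3p/(3-p)$; this forces $q=2p/(3-p)$, not your $3(p-1)/(3-p)$ (matching $3q/2=(q-1)p/(p-1)$ gives $q=2p/(3-p)$). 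The $L^p$--Sobolev inequality, obtained from the $L^1$ one applied to $v^{2p/(3-p)}$, then controls this remainder by $\int_M|\na v|^p\,d\mu$, and taking the infimum over $v$ gives $|\pa M|\le\mathrm{Cap}_1(\pa M)\le\bigl(\tfrac{2p}{3-p}\bigr)^{2p/(3-p)}C_{\mathrm{Sob}}^{3(p-1)/(3-p)}\,\mathrm{Cap}_p(\pa M)^{2/(3-p)}$, which is~\eqref{xu4} after taking square roots. In short: the Sobolev inequality is used only to absorb the H\"older remainder, never to produce the boundary area, and your proof as written breaks at its first inequality.
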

\begin{proof}
As $(M,g)$ is asymptotically flat, it supports a Sobolev inequality (see~\cite[Theorem~3.2]{PST14} for details), that is, there exists a constant $C_{\mathrm{Sob}}>0$ such that
\begin{equation}
\label{sobolev-inproof}
\Bigl(\int_M {v}^{3/{2}}\,d\mu\Bigr)^{2/3} \leq C_{\mathrm{Sob}} \int_M {\vert\nabla v\vert} \,d\mu
\end{equation}
for any nonnegative $v\in{\mathscr{C}}^{1}_c(M)$. It is well known that, applying this inequality to the function $v^{\frac{2p}{3-p}}$, one gets the $L^p$--Sobolev inequality, for any $1<p<3$
\begin{equation}
\label{psobolev-inproof}
\Bigl(\int_M {v}^{p^*}\,d\mu\Bigr)^{(3-p)/3} \leq \Big(\frac{2p}{3-p}\,C_{\mathrm{Sob}}\Big)^{\!p} \,\int_M {\vert\nabla v\vert}^p \,d\mu \,,\qquad\text{ with }\quad \,\,p^*=\frac{3p}{3-p}\,.
\end{equation}
To proceed, we recall that the $1$--capacity of $\partial M$ can be defined in analogy to the $p$--capacities (see formula~\eqref{pcapacity}) as
\begin{equation}\label{1capacity0}
\mathrm{Cap}_{1}(\partial M) \, = \, \inf \Bigg\{ \int\limits_{M}\vert \nabla w \vert\,d\mu:\, w\in \mathscr{C}^{\infty}_{c}(M), \, w=1\,\, \text{on}\,\, \partial M\Bigg\} \, .
\end{equation}
By truncation and density arguments, an equivalent definition is given by
\begin{equation}
\label{1capacity}
\mathrm{Cap}_{p}(\partial M) \,= \,\inf \Bigg\{\,\int\limits_{M}\vert \nabla w \vert^p\,d\mu:\,w\in \mathscr{C}^1_{c}(M), \,\text{$w$ take values in $[0,1]$,} \text{ $w=1$ on $\partial M$ }\Bigg\}\,,
\end{equation}
holding for every for $1\leq p<3$. Using the latter definition, if $v\in \mathscr{C}^1_{c}(M)$ is a function taking values in $[0,1]$ with $v=1$ on $\partial M$, then $w=v^q$ is a valid competitor for ${\mathrm{Cap}}_1(\partial M)$, provided $q>1$. By the H\"older inequality we get
\begin{equation}\label{xu1}
{\mathrm{Cap}}_1(\partial M) \leq \int\limits_{M} \vert{\nabla v^q}\vert \,d \mu = q \int\limits_{M} v^{q-1} {\vert\nabla v\vert} \,d\mu \leq q \Bigl(\int_{M}v^{{(q-1)}\frac{p}{p-1}} \,d\mu\Bigr)^{{(p-1)}/{p}} \Bigl(\int_{M} {\vert\nabla v\vert}^{p} \,d\mu\Bigr)^{1/p} \, .
\end{equation}
Choosing $q=1+p^*\,\frac{(p-1)}{p}=\frac{2p}{3-p} >1$ in this expression, we can use the above $L^p$--Sobolev inequality to obtain
\begin{align}\label{xu2}
{\mathrm{Cap}}_1(\partial M) &\leq\,\frac{2p}{3-p} \,\Bigl(\int_{M}v^{p^*} \,d\mu\Bigr)^{{(p-1)}/{p}} \Bigl(\int_{M} {\vert\nabla v\vert}^{p} \,d\mu\Bigr)^{1/p}\\
&\leq\,\Big(\frac{2p}{3-p}\Big)^{\!\!{2p}/{(3-p)}} C_{\mathrm{Sob}}^{\,3(p-1)/(3-p)}\Bigl(\int\limits_{M} {\vert\nabla v\vert}^{p} \,d\mu\Bigr)^{\!\!2/(3-p)}.
\end{align}
Taking the infimum in the right hand side of this inequality over the family of functions $v\in \mathscr{C}^1_c(M)$, $v:M\to[0,1]$ and $v=1$ on $\partial M$, we conclude that
\begin{equation}\label{xu3}
{\mathrm{Cap}}_1(\partial M) \,\leq \,\Big(\frac{2p}{3-p}\Big)^{\!\!{2p}/{(3-p)}} C_{\mathrm{Sob}}^{\,3(p-1)/(3-p)} {\mathrm{Cap}}_p(\partial M)^{2/(3-p)}.
\end{equation}
We now observe that in our setting the $1$--capacity of $\pa M$ is always larger than its area, namely
$$
{\mathrm{Cap}}_1(\partial M)\geq \vert{\partial M}\vert \, .
$$ 
Indeed, the coarea formula implies that, for any function $w \in{\mathscr{C}}^\infty_c(M)$ with $w=1$ on $\partial M$, 
\begin{equation}\label{1-ineq-pre}
\int\limits_{M} {\vert\nabla w\vert} \,d\mu \,\geq\,\int_0^1 \vert{\{w = t\}}\vert \,d t \,\geq\,
\inf \big\{ \vert{\partial E}\vert \ :\ {\partial M} \subseteq E, \,\partial E \,\,\text{smooth}\big\} \,\geq\,\vert{\partial M}\vert \,.
\end{equation}
Notice that the last inequality in the above chain holds since $\pa M$ is outward minimizing, as already observed. An alternative argument is by contradiction. If the latter inequality were false, then one could minimize the perimeter among the family of sets that are containing $\pa M$. This would provide a new minimal surface in $M$. Moreover, such a surface would be different from $\pa M$, as is would have a strictly smaller area, against the assumption (iii) in the statement of Theorem~\ref{RPI}. 

Combining the inequality ${\mathrm{Cap}}_1(\partial M)\geq \vert{\partial M}\vert$ with the estimate~\eqref{xu3}, we easily get the desired conclusion.
\end{proof}

We now turn our attention to the proof of claim~\eqref{eq:limend}. As the metric $g$ is $\mathscr{C}^{1, \alpha}_1$--asymptotically flat, with $\alpha \in (0,1)$, it follows from~\cite[Theorem~3.1]{Ben_Fog_Maz_2} that $u$ obeys the asymptotic expansion
\begin{equation}
\label{asy.exp.ofu}
u \,\, = \,\, 1-\frac{p-1}{3-p}\,\frac{c_{p}}{\vert \xx x\xx \vert^{\frac{3-p}{p-1}}}+o_{2}\big(\vert \xx x\xx \vert^{-\frac{3-p}{p-1}}\big)\,.
\end{equation}
In terms of the function $v=u -1+\frac{p-1}{3-p}\,\frac{c_{p}}{\vert \xx x\xx \vert^{\frac{3-p}{p-1}}}$ this corresponds to
\begin{equation}\label{eq4bisbis}
\vert\xx v\xx\vert+\sum_{i}\vert \xx x\xx\vert\,\vert\xx\partial_{i} v\xx\vert+\sum\limits_{i,\,j}\vert \xx x\xx\vert^{2}\,\vert\xx \partial_{ij}^2 v\xx\vert= \,o\big(\vert \xx x\xx\vert^{-\frac{3-p}{p-1}}\big)\,.
\end{equation}
A first consequence of the expansion~\eqref{asy.exp.ofu} is that there exists a real number $\overline{t}\in [t_{p},+\infty)$ such that $\alpha_p(t)$ is a regular value of $u$, for every $t\geq \overline{t}$, so that $\lim_{\ell\to+\infty}F_p(T_\ell)=\lim_{t\to+\infty}F_p(t)$.
The latter limit can be estimated as follows:
\begin{align}
\lim_{t\to+\infty} F_{p}(t)&=\,\,\lim_{t\to +\infty}\frac{4\pi  \,-\,\frac{t^{\frac{3-p}{p-1}}}{c_{p}} \!\!\!\!\int\limits_{\{u=\alpha_{p}(t)\}} \!\!\!\! \vert \nabla u\vert \,\mathrm{H}\,d\sigma \,+ \,\frac{t^{\frac{2(3-p)}{p-1}}}{c_{p}^{2}} \!\!\!\int\limits_{\{u=\alpha_{p}(t)\}} \!\!\!\vert \nabla u\vert^{2} \,d\sigma\,\,}{{1}/{t}}\\
&\leq\,\,\limsup_{t\to+\infty} \frac{{\scalebox{1.3}{$\frac{d}{dt}$}}\,\,\bigg[4\pi  \,-\,\frac{t^{\frac{3-p}{p-1}}}{c_{p}} \!\!\!\!\int\limits_{\{u=\alpha_{p}(t)\}} \!\!\!\! \vert \nabla u\vert \,\mathrm{H}\,d\sigma \,+ \,\frac{t^{\frac{2(3-p)}{p-1}}}{c_{p}^{2}} \!\!\!\int\limits_{\{u=\alpha_{p}(t)\}} \!\!\!\vert \nabla u\vert^{2} \,d\sigma\,\bigg]\,\,}{-{1}/{t^2}}\,, \qquad\quad\label{stima_hop}
\end{align}
where the last inequality follows from the generalized version of de~l'H\^opital's rule in~\cite[Theorem~II]{taylor1}. To apply this result one should actually check that 
$$
\lim_{t \to + \infty} \,\,\,{4\pi  \,-\,\frac{t^{\frac{3-p}{p-1}}}{c_{p}} \!\!\!\!\int\limits_{\{u=\alpha_{p}(t)\}} \!\!\!\! \vert \nabla u\vert \,\mathrm{H}\,d\sigma \,+ \,\frac{t^{\frac{2(3-p)}{p-1}}}{c_{p}^{2}} \!\!\!\int\limits_{\{u=\alpha_{p}(t)\}} \!\!\!\vert \nabla u\vert^{2} \,d\sigma\,\,} = \,\, 0
$$
This condition can be rewritten as
$$
\lim_{t \to + \infty} \,\,  \frac{1}{c_{p}^{p-1}}\!\!\!\!\int\limits_{\{u=\alpha_{p}(t)\}} \!\!\! \bigg[ 1\,-\, \frac{c_{p}^{p-1}\,\vert \nabla u\vert ^{2-p}\,\mathrm{H}}{\,\frac{3-p}{p-1}\, (1-u) \,}\,+\, \frac{c_{p}^{p-1}\,\vert \nabla u\vert ^{3-p}}{\big[\,\frac{3-p}{p-1}\, (1-u) \,\big]^2}\bigg]\,\vert \nabla u\vert^{p-1}\,d\sigma \,\, = \,\, 0
$$
In view of~\eqref{eq24} and~\eqref{eqcp}, it is sufficient to prove that
\begin{equation}\label{eq5}
1\,-\, \frac{c_{p}^{p-1}\,\vert \nabla u\vert ^{2-p}\,\mathrm{H}}{\,\frac{3-p}{p-1}\, (1-u) \,}\,+\, \frac{c_{p}^{p-1}\,\vert \nabla u\vert ^{3-p}}{\big[\,\frac{3-p}{p-1}\, (1-u) \,\big]^2}\, = \, o(1)\,.
\end{equation}
The latter statement follows from the expansions
$$
\vert \nabla u\vert=\frac{c_{p}}{\vert \xx x\xx \vert^{\frac{2}{p-1}}}\,\big(1+o(1)\big)\qquad\text{ and }\qquad\mathrm{H}=\frac{2}{\vert \xx x\xx \vert}\,\big(1+o(1)\big)\,, 
$$
which are straightforward consequences of~\eqref{asy.exp.ofu}. Hence, the estimate~\eqref{stima_hop} is now justified. Computing the derivative at the numerator of~\eqref{stima_hop} (see also~\cite[Section~3]{Ago_Maz_Oro_2}), one can continue the estimate as follows
\begin{align}
\lim_{t\to+\infty} F_{p}(t)\,\, &\leq\,\,\,\limsup_{t\to+\infty}\bigg [-\,\frac{(3-p)}{2(p-1)}\,\,\, t \!\!\!\!\!\!\int\limits_{\{u=\alpha_{p}(t)\}} \!\!\!\!\bigg( \frac{2\vert \nabla u \vert}{\phantom{\Bigl[}\,\frac{3-p}{p-1}\, (1-u) \phantom{\Bigr]}^{\phantom{1}}}\,-\,\mathrm{H}\bigg)^{\!\!2} d\sigma\\
&\phantom{=\,\,\limsup_{t\to+\infty}\Big [}\,\,\,-\,t\!\!\!\!\!\!\int\limits_{\{u=\alpha_{p}(t)\}}\!\!\! \biggl( \frac{\vert\,\nabla^{\Sigma}\vert \nabla u\vert\,\vert^{2}}{\vert \nabla u \vert^{2}}+\frac{\Ro}{2}+\frac{\,\vert \ringg{\mathrm{h}}\vert^{2}}{2} \biggr) \,d\sigma\\
&\phantom{=\,\,\limsup_{t\to+\infty}\Big [}\,\,\,+\,\, t \!\!\!\!\!\!\int\limits_{\{u=\alpha_{p}(t)\}}\!\!\!\!\! \frac{\,\rm{R}^{\Sigma}}{2}\,d\sigma\,\, -\,t\!\!\!\!\!\!\int\limits_{\{u=\alpha_{p}(t)\}}\!\!\! \!\!\!\frac{\,\,\mathrm{H}^2}{4}\,d\sigma\,\,\, \bigg]\\
&\leq\,\,\limsup_{t\to+\infty} \,\,\,\,\,\, t \!\!\!\!\!\!\int\limits_{\{u=\alpha_{p}(t)\}}\!\!\!\!\!\! \frac{\,\rm{R}^{\Sigma}}{2}\,d\sigma\,\, -\,t\!\!\!\!\!\!\int\limits_{\{u=\alpha_{p}(t)\}}\!\!\!\!\!\! \frac{\,\,\mathrm{H}^2}{4}\,d\sigma \\
&=\,\,\limsup_{t\to+\infty} \,\,\,\,\,\frac{t}{4}\,\biggl( \,16\pi\,\,- \!\!\!\!\!\!\int\limits_{\{u=\alpha_{p}(t)\}}\!\!\!\!\!\! \mathrm{H}^{2}\,d\sigma\,\biggr) \,, \label{hopref}
\end{align}
where the last identity follows by the Gauss--Bonnet theorem, as, for large $t$, the level sets are all diffeomorphic to a $2$-dimensional sphere.
Motivated by the above estimate, we now set 
\begin{equation}
{M}_p(t) \,\,= \,\,\frac{t}{4}\,\xx\,\bigg( 16\xx\pi \,\,- \!\!\!\!\!\!\!\int\limits_{\{u=\alpha_{p}(t)\}}\!\!\!\!\!\mathrm{H}^{2}\,d\sigma \bigg)\,,\label{Fp1}
\end{equation}
and we analyze its behavior at infinity with the next lemma.
\begin{lemma}\label{lemM}
Under the assumption of Theorem~\ref{RPI}, we have
\begin{equation}\label{limsupF1p}
\limsup_{t\to+\infty} {M}_p(t) \,\leq \,8\pi m_{\mathrm{ADM}}\,.
\end{equation}
\end{lemma}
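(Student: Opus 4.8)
The plan is to read off $\lim_{t\to+\infty}M_p(t)$ from the asymptotic geometry of the level sets $\Sigma_t=\{u_p=\alpha_p(t)\}$ at spatial infinity: since $\alpha_p(t)\to 1^-$ as $t\to+\infty$ and $u_p$ is proper, the $\Sigma_t$ leave every compact set. In fact I expect to obtain the full limit $M_p(t)\to 8\pi\,m_{\mathrm{ADM}}$, which is stronger than \eqref{limsupF1p}; one may of course assume $m_{\mathrm{ADM}}<+\infty$, the statement being otherwise trivial.

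\emph{Step 1 (asymptotics of the potential).} First I would combine the $\mathscr{C}^{1,\alpha}_\tau$--asymptotic flatness and the Ricci lower bound with the decay theory for $p$--harmonic functions in asymptotically flat manifolds (see~\cite{FM, PST14, Ago_Fog_Maz_2}) to show that the $p$--capacitary potential $u=u_p$ matches the flat model to leading order: in an asymptotically flat chart,
\begin{equation*}
1-u(x)\,=\,\frac{p-1}{3-p}\,c_p\,|x|^{-\frac{3-p}{p-1}}\bigl(1+o(1)\bigr)\,,\qquad |\na u(x)|\,=\,c_p\,|x|^{-\frac{3-p}{p-1}-1}\bigl(1+o(1)\bigr)\,,
\end{equation*}
as $|x|\to+\infty$, with $\na u/|\na u|=x/|x|+o(1)$ and a matching $o(\,\cdot\,)$--control on $\na^2u$ on $\{u>T_0\}$ for $T_0$ close to $1$. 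The precise constant $\tfrac{p-1}{3-p}c_p$ in front of $|x|^{-\frac{3-p}{p-1}}$ is forced by the capacity identities~\eqref{eq24} and~\eqref{eqcp}: integrating $|\na u|^{p-1}$ over a large coordinate sphere, the power of $r$ cancels because $(\tfrac{3-p}{p-1}+1)(p-1)=2$, and one recovers $\mathrm{Cap}_p(\partial M)=4\pi c_p^{\,p-1}$.

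\emph{Step 2 (level sets versus coordinate spheres).} By Step 1, $\na u\ne 0$ near infinity, so for $t$ large $\Sigma_t$ is a smooth closed surface realized as a $\mathscr{C}^1$--small graph over a coordinate sphere $S_{r(t)}$, with $r(t)\to+\infty$. Evaluating the expansion along $\Sigma_t$, where $1-u\equiv\tfrac{3-p}{p-1}(t_p/t)^{\frac{3-p}{p-1}}$, and using $t_p^{\frac{3-p}{p-1}}=\tfrac{p-1}{3-p}c_p$ from~\eqref{eqtp}, one gets $r(t)=t\,(1+o(1))$; in particular $|\Sigma_t|=4\pi t^2(1+o(1))$ and the asymptotically flat coordinate radius of $\Sigma_t$ is $t$ up to lower order. \emph{Step 3 (Hawking mass at infinity).} Using the metric expansion~\eqref{eq4bis} together with the controls on $u$ from Step 1, one computes the mean curvature $\HHH$ of $\Sigma_t$ with respect to $\na u/|\na u|$ and finds $\int_{\Sigma_t}\HHH^2\,d\sigma=16\pi+o(1)$, the first correction being the Arnowitt--Deser--Misner flux~\eqref{formADMmass} over $\Sigma_t$, which by Step 2 agrees with the flux over $S_t$ up to lower order and hence tends to $m_{\mathrm{ADM}}$; equivalently $1-\tfrac{1}{16\pi}\int_{\Sigma_t}\HHH^2\,d\sigma=\tfrac{2m_{\mathrm{ADM}}}{t}(1+o(1))$. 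Since $M_p(t)=4\pi t\bigl(1-\tfrac1{16\pi}\int_{\Sigma_t}\HHH^2\,d\sigma\bigr)$ by~\eqref{Fp1}, this gives $M_p(t)=8\pi\,m_H(\Sigma_t)+o(1)\to 8\pi\,m_{\mathrm{ADM}}$, where $m_H$ is the Hawking mass.

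I expect the main obstacle to be Step 1, and secondarily the precise cancellation exploited in Step 3. The soft route through Gauss--Bonnet, namely $16\pi-\int_{\Sigma_t}\HHH^2\,d\sigma=2\int_{\Sigma_t}\bigl(\mathrm{R}-2\,\Ric(\nu,\nu)-|\ringg{\hhh}|^2\bigr)\,d\sigma$ with $\nu=\na u/|\na u|$, is hopelessly lossy here: under only the Ricci \emph{lower} bound $\Ric\ge-Cg/|x|^2$, the Ricci integral over $\Sigma_t$ is merely $O(1)$, so it would contribute an uncontrolled $O(t)$ term to $M_p(t)$. Hence the exact cancellation of the $16\pi$ against the $1/t$ mass correction cannot be avoided, and the real work lies in establishing the sharp asymptotic expansion of the $p$--capacitary potential --- with the needed first and second derivative estimates --- under the weak hypotheses of Theorem~\ref{RPI}, and then in the careful, albeit classical, computation identifying the correction term with the ADM mass.
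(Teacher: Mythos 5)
Your outline follows the same route as the paper's proof: expand the $p$--capacitary potential at infinity, realize $\Sigma_t$ as an almost round surface of coordinate radius $\approx t$, and expand the Willmore energy $\int_{\Sigma_t}\HHH_g^2\,d\sigma_g$ so that the $1/t$ correction to $16\pi$ is precisely the ADM flux~\eqref{formADMmass}. Your observation that the soft Gauss--Bonnet route is too lossy under the mere Ricci lower bound is also correct. However, there are two issues. The lesser one concerns Step~1: since every error in the expansion of $\int_{\Sigma_t}\HHH_g^2\,d\sigma_g$ gets multiplied by $t$ in $M_p(t)$, a relative $o(1)$ control on $u$, $\D u$, $\D^2 u$ is not enough to close the argument; one needs the quantitative rate $O\big(|x|^{-\frac{3-p}{p-1}-\beta}\big)$ with $\beta>1/2$, as in~\eqref{asy.exp.ofu}, which the paper extracts from Kichenassamy--V\'eron~\cite{KV} combined with Chru\'sciel's weighted estimates~\cite{Chr1} (this is where $\tau>1/2$ enters). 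You do flag this as the main obstacle, but the references you indicate do not provide the second--order expansion at this rate, and your Step~3 bookkeeping silently relies on it.

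The genuine gap is in Step~3, where you claim the full limit $M_p(t)\to 8\pi m_{\mathrm{ADM}}$, i.e.\ that $16\pi-\int_{\Sigma_t}\HHH_g^2\,d\sigma_g$ equals $32\pi m_{\mathrm{ADM}}/t$ up to $o(1/t)$. After subtracting the ADM flux term and the tangential divergence (which integrates to zero), what remains in $M_p(t)$ is $\frac{t}{4}\big(16\pi-\int_{\Sigma_t}\HHH^2\,d\sigma\big)$ computed in the \emph{Euclidean} metric. To obtain a limit you would have to prove that the Euclidean Willmore deficit of $\Sigma_t$ is $o(1/t)$ --- a quantitative almost--roundness estimate for the level sets --- and nothing in your sketch addresses this. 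The paper does not prove it either: it simply notes that this term is nonpositive by the Euclidean Willmore inequality~\cite{Will} and discards it, which is exactly why Lemma~\ref{lemM} is stated as a $\limsup$ bound rather than a limit (and a one--sided bound suffices for Theorem~\ref{RPI}). As written, your argument is missing either the appeal to the Willmore inequality or the extra quantitative roundness estimate, and without one of the two Step~3 does not go through.
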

\begin{proof}
In the same spirit as in~\cite{HI}, we are going to compare the expression of $M_p$ with an analogous expression in which the geometric quantities are computed with respect to the Euclidean background metric. For this reason, it is convenient to explicitly write the subscript $g$, when a quantity is referred to the original metric. At the same time, we agree that if a quantity is referred to the Euclidean metric, it will be let free of subscripts. The covariant derivative with respect to $g$ will be denoted by $\na$, whereas the symbol $\D$ will indicate the Euclidean covariant derivative. Finally, the level set $\{u = \alpha_p(t)\}$ will be simply denoted by $\Sigma_t$. With these agreements, 
\begin{equation}
{M}_p(t)\,= \,\,\frac{t}{4}\,\xx\,\bigg( 16\xx\pi \,\,- \!\int\limits_{\Sigma_t}\mathrm{H}_g^{2}\,d\sigma_g \bigg)\,.
\end{equation}
Our first task is to obtain an expansion for the $g$--mean curvature $\HHH_g$ of $\Sigma_t$ in terms of its Euclidean mean curvature $\HHH$. In order to do that, we work in an asymptotically flat coordinate chart, like the one of Definition~\ref{def:AF}. As the unit normal vectors to a regular level set $\Sigma_t$ are given by
$$
\nu_g \,= \,\frac{\na u \,\,}{|\na u|_g} \qquad \hbox{and} \qquad \nu \,= \,\frac{\D u}{|\D u|} \,, 
$$
the mean curvatures are readily computed, 
\begin{equation}
\HHH_g \,= \,\left( g^{ij} - \nu_g^i \nu_g^j \right) \,\frac{\na_i \na_j u}{|\na u|_g} \qquad \hbox{and} \qquad \HHH \,= \,\left( \delta^{ij} - \nu^i \nu^j \right)\frac{ \D_i \D_j u}{|\D u|} \,, 
\end{equation}
respectively. The $g$--unit normal is related to the Euclidean one through the formula
\begin{equation}
\nu^i_g \,= \,\Big( 1 + \frac{\gamma(\nu,\nu)}{2}\Big) \,\nu^i - \,\gamma^i_k\,\nu^k \,+\, O(|x|^{-2}) \,,
\end{equation}
where $\gamma^i_k = \delta^{i j} \gamma_{j k}$. Moreover, according to 
formula~\eqref{eq4bis}, there holds
\begin{equation}
g^{ij} \,= \,\delta^{ij}-\gamma^{ij}+\, O(|x|^{-2}) \,\label{exp2} \,,
\end{equation}
where $\gamma^{ij} = \delta^{i \ell} \delta^{k j} \gamma_{k \ell}$, hence, it follows 
\begin{equation}
g^{ij} - \nu_g^i \nu_g^j \,\,= \,\,\delta^{ij} - \nu^i \nu^j \,- \,\big( \delta^{ik} - \nu^i \nu^k \big) \,\gamma_{k \ell} \,\big( \delta^{j \ell} - \nu^j \nu^\ell \big) \, +\, O(|x|^{-2}) \,.
\end{equation}
Noticing that $|\D \D u| / |\D u| = O (|x|^{-1})$, by expansion~\eqref{asy.exp.ofu} and setting $\eta^{i j} = \delta^{ij} - \nu^i \nu^j $, we then arrive at 
\begin{equation}\label{eqHH}
\HHH_g \,= \,\Big( 1 + \frac{\gamma(\nu,\nu)}{2}\Big) \,\HHH - \eta^{ij} \Big( \pa_j g_{ik} - \frac{1}{2} \pa_k g_{ij} \Big) \nu^k - \eta^{ik} \eta^{j \ell} \gamma_{k \ell} \frac{ \D_i \D_j u}{|\D u|} + O(|x|^{-3}) \,,
\end{equation}
which implies 
$$
\HHH^2_g \,= \,\big( 1 + {\gamma(\nu,\nu)} \big) \,\HHH^2 - 2 \HHH \,\eta^{ij} \Big( \pa_j g_{ik} - \frac{1}{2} \pa_k g_{ij} \Big) \nu^k - 2\HHH \,\eta^{ik} \eta^{j \ell} \gamma_{k \ell} \frac{ \D_i \D_j u}{|\D u|} +  O(|x|^{-4}) \,.
$$
As the metric induced on $\Sigma$ by $g$ can be written as $g - \frac{du \otimes du}{|\na u|^2_g}$, the area element can be expressed as 
\begin{equation}
d\sigma_g \,= \,\Big[1+\frac{1}{2}\,\eta^{ij}\gamma_{ij}+ O(\vert \xx x \xx \vert^{-2})\Big] \,d{\sigma}\,. \label{exp4} 
\end{equation}
Putting all together, the Willmore energy integrand then satisfies
\begin{align}
\HHH^2_g \,d\sigma_g\,\,= \,&\,\left[\Big( 1 + \gamma(\nu,\nu) + \frac{\eta^{ij}\gamma_{ij}}{2} \Big) \,\HHH^2 \,\right.\\
&\left. \quad- \,2 \HHH \,\eta^{ij} \Big( \pa_j g_{ik} - \frac{1}{2} \pa_k g_{ij} \Big) \nu^k - \,2\HHH \,\eta^{ik} \eta^{j \ell} \gamma_{k \ell} \frac{ \D_i \D_j u}{|\D u|} + O(|x|^{-4}) \right] \,d{\sigma}.
\quad\qquad \label{eq:will}
\end{align}
Now, we further refine the above expression in light of the asymptotic expansion of the $p$--capacitary potential. Indeed, by means of formula~\eqref{asy.exp.ofu} again, we readily compute
\begin{align}
\frac{ \D_i \D_j u}{|\D u|}& \,= \,\frac{1}{\vertl x\vertr}\Big(\delta_{ij}-\frac{p+1}{p-1}\,\delta_{ik}{\nu}^{k}\,\delta_{j \ell}{\nu^{\ell}}+ o(1)\,\Big)\label{eqf47}\\
{\mathrm{H}}&\,= \,\frac{2}{\vertl x\vertr}\bigl(1+o(1)\bigr)\label{eqf48} 
\end{align}
which implies
\begin{equation}\label{eql49}
2\HHH \,\eta^{ik} \eta^{j \ell} \gamma_{k \ell} \frac{ \D_i \D_j u}{|\D u|} \,= \,\frac{4}{|x|^2} \eta^{k \ell} \gamma_{k \ell} +o(|x|^{-3})\,.
\end{equation}
Plugging this information in formula~\eqref{eq:will}, we obtain
\begin{equation}
\HHH^2_g \,d\sigma_g\,\,= \,\,\left[ \,\HHH^2 \,+ \,\frac{4}{|x|^2} \gamma(\nu , \nu) \,- \,\frac{2}{|x|^2} \eta^{i j} \gamma_{i j} \,- \,\frac{4}{|x|} \eta^{ij} \Big( \pa_j g_{ik} - \frac{1}{2} \pa_k g_{ij} \Big) \nu^k + o(|x|^{-3}) \right] \,d{\sigma} \,.
\end{equation}
To proceed, we now claim that 
\begin{equation}
\label{claimdiv}
\frac{4}{|x|^2} \gamma(\nu , \nu) \,- \,\frac{2}{|x|^2} \eta^{i j} \gamma_{i j} \,= \,\frac{2}{|x|} \bigl( \eta^{ij} \pa_i g_{jk} \nu^k - \mathrm{div}_{\Sigma_{t}} \omega^\top\,\bigr) \,+ \,o(|x|^{-3})\,,
\end{equation}
where $\omega$ is the differential $1$--form defined by $\omega = \gamma_{jk} \nu^k {d} x^j$ and $\omega^\top$ denotes its tangential component. To prove the claim, let us first observe that
$$
\omega = \omega^\top +\omega(\nu)\frac{du}{\vert \D u\vert} = \omega^\top + \gamma(\nu,\nu)\frac{du}{\vert \D u\vert}\qquad\text{ and }\qquad
\pa_i \nu^k \,= \,\eta^{k \ell} \frac{ \D_i \D_\ell u}{|\D u|} \,.
$$
By means of the expansions~\eqref{eqf48} and~\eqref{eql49}, we compute 
\begin{align}
\eta^{ij} \pa_i g_{jk} \nu^k =\eta^{ij} \pa_i \gamma_{jk} \nu^k & = \,\eta^{ij} \pa_i \omega_j \,- \,\eta^{ik} \eta^{j \ell} \gamma_{k \ell} \frac{ \D_i \D_j u}{|\D u|}\\
& = \,\mathrm{div}\,\omega \,- \,\pa_i \omega_j \nu^i \nu^j \,- \,\frac{1}{|x|} \eta^{ij} \gamma_{ij} + o(|x|^{-2})\\
& = \,\mathrm{div}_{\Sigma_t} \omega^\top \,+ \,\gamma(\nu,\nu) \HHH - \,\frac{1}{|x|} \eta^{ij} \gamma_{ij} + o(|x|^{-2})\\
& = \,\mathrm{div}_{\Sigma_t}\omega^\top \,+ \,\frac{2}{|x|}\gamma(\nu,\nu) - \,\frac{1}{|x|} \eta^{ij} \gamma_{ij} +o(|x|^{-2})\,.
\end{align}
Claim~\eqref{claimdiv} then follows with the help of some simple algebra. As a consequence, the expression for the Willmore energy integrand becomes
\begin{align}
\HHH^2_g \,d\sigma_g\,\,= \,\,& \left[ \,\HHH^2 - \frac{2}{|x|} \mathrm{div}_{\Sigma_{t}} \omega^\top + \frac{2}{|x|} \eta^{ij} \pa_i g_{jk} \nu^k - \frac{4}{|x|} \eta^{ij} \pa_j g_{ik}\nu^k + \frac{2}{|x|}\eta^{ij} \pa_k g_{ij}\nu^k+ o(|x|^{-3}) \right] d{\sigma}\\
= \,\,& \left[ \,\HHH^2 - \frac{2}{|x|} \mathrm{div}_{\Sigma_{t}} \omega^\top - \frac{2}{|x|} \delta^{ij} \big( \pa_i g_{jk} - \pa_k g_{ij} \big) \nu^k + o(|x|^{-3}) \right] d{\sigma} \,. \label{eq:nice}
\end{align}
Before integrating this formula, let us observe that, since $u = 1 - (t_p/t)^{(3-p)/(p-1)}$ on $\Sigma_t$, it follows from expansion~\eqref{asy.exp.ofu} that 
\begin{equation}
\frac{1}{|x|} \,= \,\frac{1}{t} + o(|x|^{-1})\qquad \hbox{and} \qquad 
\frac {A}{t^{2}}\,\leq\,\vert \xx \nabla u\xx \vert_g^{p-1} \leq\,\frac{B}{t^{2}} \,\qquad \text{on $\Sigma_t$,}
\end{equation}
for some positive constants $A,B$. Then, integrating $|\na u|_g^{p-1}$ on $\Sigma_t$ with the help of equations~\eqref{eq24} and~\eqref{eqcp}, we get
\begin{equation}
\frac{4 \pi c_p^{p-1}}{B} \,t^2 \,\,\leq \,|\Sigma_t|_g \,\leq \frac{4 \pi c_p^{p-1}}{A} \,t^2 
\end{equation}
and by virtue of equation~\eqref{exp4}, the same estimates hold for the Euclidean area $|\Sigma_t|$ in place of $|\Sigma_t|_g$, up to a different choice of the constants. In view of these considerations, formula~\eqref{eq:nice} becomes
\begin{equation}
\HHH^2_g \,d\sigma_g\,\,= \,\,\left[ \,\HHH^2 - \frac{2}{t} \mathrm{div}_{\Sigma_{t}} \omega^\top - 
\frac{2}{t} \delta^{ij} \big( \pa_i g_{jk} - \pa_k g_{ij} \big) \nu^k + o(t^{-3}) \right] d{\sigma} \,, \quad \text{on $\Sigma_t$\,.}\label{eq:verynice}
\end{equation}
Now we have
\begin{align}
{M}_p(t)& \,= \,\frac{t}{4}\,\xx\,\bigg( 16\xx\pi \,\,- \!\int\limits_{\Sigma_t}\mathrm{H}_g^{2}\,d\sigma_g \bigg)\\
&\,= \,\frac{t}{4}\,\xx\,\bigg( 16\xx\pi \,\,- \!\int\limits_{\Sigma_t}\mathrm{H}^{2}\,d\sigma \bigg) + \,\frac{1}{2} \int\limits_{\Sigma_t} \mathrm{div}_{\Sigma_{t}}\omega^\top\,d\sigma + \,\frac{1}{2} \int\limits_{\Sigma_t} 
\delta^{ij} \big( \pa_i g_{jk} - \pa_k g_{ij} \big) \nu^k \,d\sigma + o(1) \,.
\end{align}
The first summand in the right hand side is nonpositive by the {\em Euclidean Willmore inequality} (see~\cite{Will}), the second summand vanishes by the divergence theorem and it is well known (see~\cite[Proposition~4.1]{Bartnik}) that the third summand tends to $8 \pi m_{\rm ADM}$, as $t \to + \infty$. 
\end{proof}
Combining Lemma~\ref{lemM} with the estimate~\eqref{hopref}, we get
\begin{equation}
\lim_{\ell \to + \infty} F_p(T_\ell) \,=\,\lim_{t \to + \infty} F_p(t) \,\leq \,\limsup_{t \to + \infty} M_p(t) \,\leq \,8 \pi m_{\rm ADM} \,.
\end{equation}
This proves the claim~\eqref{eq:limend} and concludes the proof of the theorem.
\end{proof}

\bibliographystyle{amsplain}
\bibliography{biblio}

\providecommand{\bysame}{\leavevmode\hbox to3em{\hrulefill}\thinspace}
\providecommand{\MR}{\relax\ifhmode\unskip\space\fi MR }
\providecommand{\MRhref}[2]{%
  \href{http://www.ams.org/mathscinet-getitem?mr=#1}{#2}
}
\providecommand{\href}[2]{#2}
\begin{thebibliography}{10}

\bibitem{Ago_Fog_Maz_1}
V.~Agostiniani, M.~Fogagnolo, and L.~Mazzieri, \emph{Sharp geometric
  inequalities for closed hypersurfaces in manifolds with nonnegative {R}icci
  curvature}, Invent. Math. \textbf{222} (2020), 1033--1101.

\bibitem{Ago_Fog_Maz_2}
V.~Agostiniani, M~Fogagnolo, and L.~Mazzieri, \emph{Minkowski inequalities via
  nonlinear potential theory}, Arch. Ration. Mech. Anal. \textbf{244} (2022),
  51--85.

\bibitem{Ago_Maz_CMP}
V.~Agostiniani and L.~Mazzieri, \emph{On the geometry of the level sets of
  bounded static potentials}, Comm. Math. Phys. \textbf{355} (2017), 261--301.

\bibitem{Ago_Maz_CV}
V.~Agostiniani and L.~Mazzieri, \emph{Monotonicity formulas in potential
  theory}, Calc. Var. Partial Differential Equations \textbf{59} (2020), Paper
  No. 6, 32 pp.

\bibitem{Ago_Maz_Oro_1}
V.~Agostiniani, L.~Mazzieri, and F.~Oronzio, \emph{A geometric capacitary
  inequality for {sub--static} manifolds with harmonic potentials}, Math. Eng.
  \textbf{4} (2022), Paper No. 013, 40 pp.

\bibitem{Ago_Maz_Oro_2}
V.~Agostiniani, L.~Mazzieri, and F.~Oronzio, \emph{A {G}reen's function proof
  of the positive mass theorem}, Comm. Math. Phys. \textbf{405} (2024), Paper
  No. 54, 23 pp.

\bibitem{Aless}
G.~Alessandrini, \emph{Critical points of solutions to the {$p$--L}aplace
  equation in dimension two}, Unione Mat. Ital. A (7) \textbf{1} (1987),
  239--246.

\bibitem{AdM0}
R.~Arnowitt, S.~Deser, and C.~W. Misner, \emph{Dynamical structure $\text{and}$
  definition of energy in general relativity}, Phys. Rev. (2) \textbf{116}
  (1959), 1322--1330.

\bibitem{Bartnik}
R.~Bartnik, \emph{The mass of an asymptotically flat manifold}, Comm. Pure
  Appl. Math. \textbf{39} (1986), 661--693.

\bibitem{Ben_Fog_Maz_2}
L.~Benatti, M.~Fogagnolo, and L.~Mazzieri, \emph{The asymptotic behaviour of
  {$p$--}capacitary potentials in asymptotically conical manifolds}, Math. Ann.
  \textbf{388} (2024), 99--139.

\bibitem{Ben_Fog_Maz_1}
L.~Benatti, M.~Fogagnolo, and L.~Mazzieri, \emph{Minkowski inequality on
  complete {R}iemannian manifolds with nonnegative {R}icci curvature}, Anal.
  PDE \textbf{17} (2024), 3039--3077.

\bibitem{bray1}
H.~L. Bray, \emph{Proof of the {R}iemannian {P}enrose inequality using the
  positive mass theorem}, J. Differential Geom. \textbf{59} (2001), 177--267.

\bibitem{Bra_Kaz_Khu_Ste_2019}
H.~L. Bray, D.~Kazaras, M.~A. Khuri, and D.~L. Stern, \emph{Harmonic functions
  and the mass of $3$--dimensional asymptotically flat {R}iemannian manifolds},
  J. Geom. Anal. \textbf{32} (2022), Paper No. 184, 29 pp.

\bibitem{Bredon}
G.~E. Bredon, \emph{{\em ``Topology and geometry''}}, Springer-Verlag, New
  York, 1993.

\bibitem{CGG}
Y.~G. Chen, Y.~Giga, and S.~Goto, \emph{{Uniqueness and existence of viscosity
  solutions of generalized mean curvature flow equations}}, J. Differential
  Geom. \textbf{33} (1991), 749--786.

\bibitem{Chrusciel}
P.~T. Chru\'{s}ciel, \emph{Boundary conditions at spatial infinity from a
  {H}amiltonian point of view}, In: ``Topological properties and global
  structure of {space--time}'', P. G. Bergmann and V. Sabbata (eds.), Plenum,
  New York, 1986, 49--59.

\bibitem{Colding_Acta}
T.~H. Colding, \emph{New monotonicity formulas for {R}icci curvature and
  applications. {I}}, Acta Math. \textbf{209} (2012), 229--263.

\bibitem{Col_Min_2}
T.~H. Colding and W.~P. Minicozzi, \emph{Monotonicity and its analytic and
  geometric implications}, Proc. Natl. Acad. Sci. USA \textbf{110} (2013),
  19233--19236.

\bibitem{Col_Min_3}
T.~H. Colding and W.~P. Minicozzi, \emph{Ricci curvature and monotonicity for
  harmonic functions}, Calc. Var. Partial Differential Equations \textbf{49}
  (2014), 1045--1059.

\bibitem{DiBenedetto1}
E.~{Di~Benedetto}, \emph{{$C^{1+\alpha }$} local regularity of weak solutions
  of degenerate elliptic equations}, Nonlinear Anal. \textbf{7} (1983),
  827--850.

\bibitem{DiBenedetto2}
E.~{Di~Benedetto}, \emph{Interior and boundary regularity for a class of free
  boundary problems}, In ``Free boundary problems: theory and applications,
  {V}ol. {I}, {II}'', A. Fasano and M. Primicerio (eds.), Pitman, Boston,
  Mass.--London, 1983, 383--396.

\bibitem{ES1}
L.~C. Evans and J.~Spruck, \emph{Motion of level sets by mean curvature. {I}},
  J. Differential Geom. \textbf{33} (1991), 635--681.

\bibitem{ES2}
L.~C. Evans and J.~Spruck, \emph{Motion of level sets by mean curvature. {II}},
  Trans. Amer. Math. Soc. \textbf{330} (1992), 321--332.

\bibitem{FM}
M.~Fogagnolo and L.~Mazzieri, \emph{Minimising hulls, {$p$--capacity} and
  isoperimetric inequality on complete {R}iemannian manifolds}, J. Funct. Anal.
  \textbf{283} (2022), Paper No. 109638, 49 pp.

\bibitem{FMP}
M.~Fogagnolo, L.~Mazzieri, and A.~Pinamonti, \emph{Geometric aspects of
  {$p$}-capacitary potentials}, Ann. Inst. H. Poincar\'{e} C Anal. Non
  Lin\'{e}aire \textbf{36} (2019), 1151--1179.

\bibitem{Ger}
R.~Geroch, \emph{Energy extraction}, Ann. New York Acad. Sci. \textbf{224}
  (1973), 108--117.

\bibitem{HKO}
J.~Heinonen, T.~Kilpel\"{a}inen, and O.~Martio, \emph{{\em ``Nonlinear
  potential theory of degenerate elliptic equations''}}, Oxford Science
  Publications, Clarendon Press, Oxford, 1993.

\bibitem{Hir_Kaz_Khu}
S.~Hirsch, D.~Kazaras, and M.~Khuri, \emph{Spacetime harmonic functions and the
  mass of $3${--}dimensional asymptotically flat initial data for the
  {E}instein equations}, J. Differential Geom. \textbf{122} (2022), 223--258.

\bibitem{HI}
G.~Huisken and T.~Ilmanen, \emph{The inverse mean curvature flow $\text{and}$
  the {R}iemannian {P}enrose inequality}, J. Differential Geom. \textbf{59}
  (2001), 353--437.

\bibitem{JW77}
P.~S. Jang and R.~M. Wald, \emph{The positive energy conjecture and the cosmic
  censor hypothesis}, J. Math. Phys. \textbf{18} (1977), 41--44.

\bibitem{Lieb88}
G.~M. Lieberman, \emph{Boundary regularity for solutions of degenerate elliptic
  equations}, Nonlinear Anal. \textbf{12} (1988), 1203--1219.

\bibitem{Manfredi}
J.~J. Manfredi, \emph{{$p$--}harmonic functions in the plane}, Proc. Amer.
  Math. Soc. \textbf{103} (1988), 473--479.

\bibitem{MRS}
L.~Mari, M.~Rigoli, and A.~G. Setti, \emph{On the {$1/H$}--flow by
  {$p$}--{L}aplace approximation: new estimates via fake distances under
  {R}icci lower bounds}, Amer. J. Math. \textbf{144} (2022), 779--849.

\bibitem{Moser2007}
R.~Moser, \emph{The inverse mean curvature flow and {$p$--}harmonic functions},
  J. Eur. Math. Soc. (JEMS) \textbf{9} (2007), 77--83.

\bibitem{Mun_Wan}
O.~Munteanu and J.~Wang, \emph{Comparison theorems for 3{D} manifolds with
  scalar curvature bound}, Int. Math. Res. Not. IMRN (2023), 2215--2242.

\bibitem{PST14}
S.~Pigola, A.~G. Setti, and M.~Troyanov, \emph{The connectivity at infinity of
  a manifold and {$L^{q,p}$--Sobolev} inequalities}, Expo. Math. \textbf{32}
  (2014), 365--383.

\bibitem{serrin1970}
J.~Serrin, \emph{On the strong maximum principle for quasilinear second order
  differential inequalities}, J. Funct. Anal. \textbf{5} (1970), 184--193.

\bibitem{Stern}
D.~Stern, \emph{Scalar curvature and harmonic maps to $\mathbb{S}^1$}, J.
  Differential Geom. \textbf{122} (2022), 259--269.

\bibitem{taylor1}
A.~E. Taylor, \emph{L'{H}ospital's rule}, Amer. Math. Monthly \textbf{59}
  (1952), 20--24.

\bibitem{Will}
T.~Willmore, \emph{{\em ``Total curvature in {R}iemannian geometry''}}, Halsted
  Press, New York, 1982.

\end{thebibliography}

\end{document}